\DeclareMathOperator{\br}{br}
\DeclareMathOperator{\ch}{ch}
\renewcommand{\c}{{\rm c}}
\DeclareMathOperator{\td}{td}
\DeclareMathOperator{\Bog}{Bog}
\DeclareMathOperator{\im}{im}
\newcommand{\orb}{\mathcal}
\renewcommand{\O}{{\mathcal O}}
\else\declaretheorem[parent=section]{theorem}\fi
\else\declaretheorem[sibling=theorem]{corollary}\fi
\else\declaretheorem[sibling=theorem]{proposition}\fi
\else\declaretheorem[sibling=theorem]{conjecture}\fi
\else\declaretheorem[sibling=theorem, style=definition]{definition}\fi
\else\declaretheorem[sibling=theorem, style=definition]{example}\fi
\declaretheorem[sibling=theorem, style=remark]{remark}\fi
\providecommand {\Z}{{\bf Z}}
\providecommand {\Q}{{\bf Q}}
\renewcommand {\P}{{\bf P}}
\providecommand{\SL}{\operatorname{SL}}
\providecommand {\from}{{\colon}}
\providecommand{\Hom}{\operatorname{Hom}}
\providecommand{\Ext}{\operatorname{Ext}}
\providecommand{\Tor}{\operatorname{Tor}}
\providecommand{\End}{\operatorname{End}}
\providecommand{\Pic}{\operatorname{Pic}}
\providecommand{\Sym}{\operatorname{Sym}}
\providecommand{\rk}{\operatorname{rk}} \declaretheorem[sibling=theorem,style=remark]{remark}
\numberwithin{equation}{section}
\title{Syzygy divisors on Hurwitz spaces}
\author{Anand Deopurkar \and Anand Patel}
\begin{document}
\maketitle

\begin{abstract}
  We describe a sequence of effective divisors on the Hurwitz space $H_{d,g}$ for $d$ dividing $g-1$ and compute their cycle classes on a partial compactification.
  These divisors arise from vector bundles of syzygies canonically associated to a branched cover.
  We find that the cycle classes are all proportional to each other.
\end{abstract}

\section{Introduction}
The Hurwitz space $H_{d,g}$ is the moduli space of maps $\alpha \from C \to \P^1$, where $C$ is a smooth algebraic curve of genus $g$ and $\alpha$ is finite of degree $d$.
It is one of the oldest moduli spaces studied in algebraic geometry.
Indeed, its idea goes back to the time of Riemann---a time when algebraic curves were thought of primarily as branched covers of the Riemann sphere.
It was put on a rigorous modern algebraic footing by Fulton \cite{ful:69}.
It was compactified by Harris and Mumford \cite{har.mum:82}, whose construction was then refined by Mochizuki \cite{moc:95} and Abramovich, Corti, and Vistoli \cite{abr.cor.vis:03}.
We refer the reader to \cite{rom.wew:06} for an introduction to Hurwitz spaces.

The Hurwitz spaces have attracted mathematical attention not only because of their intrinsic appeal, but also because of their role in illuminating the geometry of the moduli space $M_g$.
Indeed, it was through the Hurwitz spaces that Riemann \cite{rie:57} computed the dimension of $M_g$ and Severi \cite{sev:68}, building on work of Clebsch \cite{cle:73} and Hurwitz \cite{hur:01}, showed that $M_g$ is irreducible.
In more recent times, Harris and Mumford \cite{har.mum:82} used the compactified Hurwitz spaces to carry out a divisor class computation to show that $\overline M_g$ is of general type for large $g$.
Hurwitz spaces and their variants have also been of interest outside of algebraic geometry.
For example, spaces of branched covers of $\P^1$ with a given Galois group feature prominently in inverse Galois theory \cite{fri:77,fri.vol:91}, and spaces of covers of $\P^1$ by $\P^1$ play a key role in dynamics \cite{sil:98}.

Although Hurwitz spaces have been around for centuries, there are still many basic open questions about them.
One such question is the question of placing them in the birational classification of varieties:
For which $d$ and $g$ is $H_{d,g}$ rational, unirational, uniruled, rationally connected, or of general type?
As with many other questions about them, the answer is known only for very small or very large $d$.
For $d \leq 5$, the space $H_{d,g}$ is known to be unirational, thanks to a determinantal description of covers of degree up to 5.
For $d > \lfloor g/2 \rfloor$, the space $H_{d,g}$ dominates $M_g$.
Since $M_g$ has non-negative Kodaira dimension for $g \geq 22$, in this case $H_{d,g}$ cannot be uniruled.  
The intermediate cases are rather mysterious, but remain an active area of research.
See, for example, the recent work \cite{gei:12} on unirationality results for $d = 6$ and all $g \leq 28$ (and several more, up to $g = 45$).

At the heart of determining the birational type of $\overline H_{d,g}$ is the question of understanding its cone of effective divisors.
Indeed, modulo an obstruction coming from singularities, saying that $\overline H_{d,g}$ is of general type is equivalent to saying that its canonical class lies in the interior of its effective cone.
To show this, we need not know the full effective cone; it would suffice to know enough effective divisors, whose classes contain the canonical class in their convex span.

One way to get effective divisors is by using topology.
A general point of $H_{d,g}$ parametrizes simply branched covers.
In codimension 1, this simple topological picture can specialize in two ways: the cover can develop a ramification point of index 2 or can have two ramification points of index 1 over the same branch point.
The two possibilities give two effective divisors on $H_{d,g}$.

The goal of this paper is to describe a number of other effective divisors on $H_{d,g}$ (for $d$ dividing $g-1$) and to compute their classes on a partial compactification $\widetilde H_{d,g}$.
Their origin is distinctly algebraic, orthogonal to any topological considerations.
They are in the spirit of the classical Maroni divisor, and generalize the Casnati--Ekedahl divisors studied by the second author \cite{pat:15}.

Before describing the divisors, we recall the Maroni divisor.
A finite map $\alpha \from C \to \P^1$ canonically factors as an embedding $\iota \from C \to \P E$ followed by a projection $\pi \from \P E \to \P^1$, where $E$ is a vector bundle of rank $d-1$ and degree $g+d-1$ on $\P^1$.
In the cases where the rank divides the degree, the bundle $E$ is balanced for generic $\alpha$---it is a twist of the trivial bundle.
The \emph{Maroni divisor} is the locus of $\alpha$ for which it is unbalanced.

Our divisors $\mu_1, \dots, \mu_{d-3}$, which we call \emph{syzygy divisors}, are defined analogously for a sequence of other vector bundles $N_1, \dots, N_{d-3}$ associated with $\alpha$.
Roughly, $N_i$ is the bundle whose fiber at $t \in \P^1$ is the vector space of $(i-1)$th syzygies among the generators of the homogeneous ideal of $C_t \subset \P E_t$.
If $d$ divides $g-1$, then $N_i$ has rank dividing the degree, and for generic $\alpha$, it is balanced.
The divisor $\mu_i$ is the locus where it is unbalanced.

\begin{theorem}[Main]
  \label{thm:main}
  Suppose $d$ divides $g-1$.
  Let $i$ be an integer with $1 \leq i \leq d-3$.
  The locus $\mu_i \subset \widetilde H_{d,g}$ is an effective divisor whose class in $\Pic_\Q\widetilde H_{d,g}$ is given by
  \[ [\mu_i] = A_i \left(6(gd-6g+d+6) \cdot \zeta - d(d-12)\cdot \kappa - d^2 \cdot \delta \right),\]
  where
  \[ A_i = {{d-4} \choose {i-1}}^2 \frac{(d-2)(d-3)}{6(i+1)(d-i-1)}.\]
\end{theorem}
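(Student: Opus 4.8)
The plan is to realize each $\mu_i$ as a degeneracy locus and compute its class by Grothendieck--Riemann--Roch. I work over the universal family: let $p \from P \to \widetilde H_{d,g}$ be the universal $\P^1$ and $q \from \P E \to P$ the universal $\P^{d-2}$-bundle containing the universal cover $C$, with $\alpha = q|_C \from C \to P$. The bundles $N_i$ live on $P$, and over $\widetilde H_{d,g}$ the relative Casnati--Ekedahl resolution
\[ 0 \to q^* N_{d-2}(-d) \to \cdots \to q^* N_i(-i-1) \to \cdots \to q^* N_1(-2) \to \O_{\P E} \to \O_C \to 0 \]
extends (this is arranged when the partial compactification is defined earlier). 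On each fibre of $p$ the bundle $N_i$ is balanced of slope $b_i = e_i/r_i$, where $r_i = \rk N_i$ and $e_i$ is its fibre degree; the hypothesis $d \mid g-1$ guarantees $b_i \in \Z$.

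First I would twist by a line bundle $L_i$ on $P$ restricting to $\O(-b_i-1)$ on the fibres of $p$, so that $N_i \otimes L_i$ restricts to $\O(-1)^{r_i}$ on a general fibre and is therefore acyclic. Hence $Rp_*(N_i\otimes L_i)$ vanishes generically and is represented by a two-term complex $[\phi \from V_0 \to V_1]$ of vector bundles on $\widetilde H_{d,g}$ with $\phi$ a generic isomorphism, and the locus where $N_i$ jumps is exactly $Z(\det\phi)$. This exhibits $\mu_i$ as an effective divisor—once one checks $\mu_i \neq \widetilde H_{d,g}$, i.e.\ that the general cover has balanced $N_i$—with
\[ [\mu_i] = c_1(V_1) - c_1(V_0) = -c_1\!\left(Rp_*(N_i\otimes L_i)\right). \]
Since $Rp_*(N_i\otimes L_i)$ has rank $0$, tensoring $L_i$ by the pullback of a line bundle from the base leaves this class unchanged, so only the fibrewise degree of $L_i$ matters. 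Next I would compute the right-hand side by GRR,
\[ c_1\!\left(Rp_*(N_i\otimes L_i)\right) = \left[p_*\!\left(\ch(N_i)\,\ch(L_i)\,\td(T_p)\right)\right]_1, \]
with $T_p$ the relative tangent bundle of the $\P^1$-fibration. Extracting the codimension-one part on the base needs $\ch(N_i)$ only through degree two, that is $r_i$, $c_1(N_i)$ and $\ch_2(N_i)$. The ranks $r_i$ are the Betti numbers of $d$ general points in $\P^{d-2}$, read off from the Hilbert function $1,d-1,d,d,\dots$, and the fibre degrees $e_i$ follow from the fibrewise resolution over a general $\P^1$, on which $E$ is balanced.

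The heart of the computation—and the step I expect to be the main obstacle—is pinning down the global invariants $c_1(N_i)$ and $\ch_2(N_i)$ of each individual syzygy bundle on $P$. The resolution yields only the single relation $\ch(\O_C)-1 = \sum_{i\ge 1}(-1)^{i-1}\ch(q^*N_i(-a_i))$ (with $a_i=i+1$ for $i<d-2$ and $a_{d-2}=d$) among the $\ch(N_i)$, so the individual characters must be isolated by finer structure: the self-duality $N_i \cong N_{d-2-i}^\vee \otimes(\text{twist})$ coming from the Gorenstein property of $\alpha$, which already forces the symmetry $A_i = A_{d-2-i}$ visible in the statement; an explicit or inductive identification of the syzygy bundles in terms of $E$ and the relative $\O(1)$ matching the Betti table above; and careful bookkeeping of the boundary corrections over $\widetilde H_{d,g}$ that produce the $\delta$-terms.

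Finally I would substitute the resulting expressions, together with the formulas for $\ch(E)$, $T_p$, and the boundary in terms of the generators $\zeta,\kappa,\delta$ of $\Pic_\Q\widetilde H_{d,g}$ established earlier, into the GRR formula and simplify. The expectation is that the combinatorial $i$-dependence collects into the factor $A_i$ while the remaining divisor class is the $i$-independent combination $6(gd-6g+d+6)\zeta - d(d-12)\kappa - d^2\delta$. The two delicate points are getting $\ch_2(N_i)$ exactly right and verifying that the resulting algebra genuinely factors as a single numerical coefficient times one fixed class—this last factorization being precisely the proportionality asserted in the theorem.
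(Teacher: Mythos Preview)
Your overall architecture is right and matches the paper: realize $\mu_i$ as a degeneracy locus for a rank-zero $Rp_*$, then compute its first Chern class by GRR. Two points, one minor and one essential.

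\textbf{Minor.} The paper defines the scheme structure on $\mu_i$ via $\End(N_i)\otimes\O(-1)$ rather than $N_i\otimes L_i$, and the class it computes is literally $-c_1 Rp_*(\End N_i\otimes\O(-1)) = p_*\bigl(c_1(N_i)^2 - 2r_i\,\ch_2(N_i)\bigr)$. Your $-c_1 Rp_*(N_i\otimes L_i)$ detects the same set but with a different weighting, so you would need to reconcile the two normalizations before claiming the stated formula for $[\mu_i]$.

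\textbf{Essential.} You correctly identify the crux as isolating $c_1(N_i)$ and $\ch_2(N_i)$ individually, and you correctly observe that the Casnati--Ekedahl resolution only gives one linear relation among all the $\ch(N_j)$. But the tools you list for separating them are not enough: self-duality relates $N_i$ to $N_{d-2-i}$, cutting the unknowns roughly in half, and ``explicit or inductive identification in terms of $E$ and $\O(1)$'' is a hope, not a mechanism. For $d\ge 6$ you will still have more unknown Chern characters than relations.

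The device the paper uses, which is absent from your proposal, is to compute $\Tor$ the other way. Writing $S=\Sym^*E$ and $I\subset S$ the relative homogeneous ideal of $C$, one has $N_i=\Tor^S_i(S/I,\O_P)_{i+1}$. Resolving $\O_P$ by the Koszul complex on $E$ and tensoring with $S/I$ gives, in graded piece $i+1$, the complex
\[
K_{i+1}\colon\ \wedge^{i+1}E \to \wedge^{i}E\otimes E \to \wedge^{i-1}E\otimes\alpha_*\omega_\alpha^2 \to \cdots \to \alpha_*\omega_\alpha^{i+1},
\]
whose \emph{only} cohomology is $N_i$ in degree $i$ (precisely because the Casnati--Ekedahl resolution has $N_{ij}=0$ for $j\neq 0$). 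Hence $\ch(N_i)$ is an explicit alternating sum of $\ch(\wedge^a E)\cdot\ch(\alpha_*\omega_\alpha^b)$, each of which is computable from $\ch(E)$ and $\pi_*c_1(\omega_\alpha)^2$ by GRR and standard identities for $\ch(\wedge^a)$. That is what produces closed formulas for $c_1(N_i)$ and $\ch_2(N_i)$ and, after substitution into the Bogomolov expression and the relations among $\kappa,\zeta,\delta$, the factorization $A_i\cdot(\text{fixed class})$.

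You also defer the check that $\mu_i\ne\widetilde H_{d,g}$; the paper handles this separately by exhibiting explicit balanced covers on $E\times\P^1$ with $E$ elliptic (this is where the hypothesis $d\mid g-1$ enters as a genuine constraint, not just a numerology).
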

In the theorem, $\widetilde H_{d,g}$ is the coarse moduli space of $\alpha \from C \to \P^1$, where $C$ is an irreducible curve of arithmetic genus $g$ with at worst nodal singularities and $\alpha$ is a finite map of degree $d$.
The classes $\zeta$, $\kappa$, and $\delta$ are certain tautological divisor classes in $\Pic_\Q\widetilde H_{d,g}$; they are conjectured to generate $\Pic_\Q\widetilde H_{d,g}$.
See \autoref{sec:div} for definitions.

The most surprising feature of the divisor class is that (up to scaling) it is independent of $i$.
However, we do not expect the divisors $\mu_i$ themselves to be supported on the same set (see \autoref{sec:supports}). 
This is reminiscent of a similar phenomenon with the Brill--Noether divisors on $\overline M_g$---the classes of all the divisorial Brill--Noether loci are proportional, although the loci themselves are different.

\autoref{thm:main} gives the class of $\mu_i$ on a \emph{partial} compactification $\widetilde H_{d,g}$ of $H_{d,g}$.
It is an interesting (and challenging) problem to compute the class of the closure of $\mu_i$ on a full compactification.
This was carried out for the Maroni divisor for $d = 3$ in \cite{deo.pat:13} and for higher $d$ in \cite{gee.kou:17}.
It would also be interesting to find replacements for $\mu_i$ when $d$ does not divide $g-1$.
This would be analogous to the replacement of the Maroni divisor in the case of odd genus trigonal curves found in \cite{deo.pat:13}.

The paper is organized as follows.
In \autoref{sec:div}, we recall the (largely conjectural) description of the Picard group of $\widetilde H_{d,g}$ and describe a number of divisor classes on $\widetilde H_{d,g}$.
These include the syzygy divisors $\mu_1, \dots, \mu_{d-3}$, whose existence is contingent on the fact that the syzygy bundles $N_i$ are balanced for a generic cover.
In \autoref{sec:gen}, we discuss the generic splitting type of the syzygy bundles $N_i$.
In \autoref{sec:computation}, we carry out the main computation of the divisor class of $\mu_i$.

We work over an algebraically closed field of characteristic zero.
All schemes and stacks are locally of finite type over this field.
A \emph{point} is a closed point, unless mentioned otherwise.
The projectivization $\P U$ of a vector bundle $U$ denotes the space of one-dimensional \emph{quotients}.
The Hurwitz spaces $\widetilde H_{d,g}$ and $H_{d,g}$ have \emph{unparametrized} source and target.
That is, $\alpha_1 \from C_1 \to \P^1$ and $\alpha_2 \from C_2 \to \P^1$ are considered isomorphic if there are isomorphisms $\phi \from C_1 \to C_2$ and $\psi \from \P^1 \to \P^1$ with $\alpha_2 \circ \phi = \psi \circ \alpha_1$.

\section{Divisors on Hurwitz spaces}\label{sec:div}
The goal of this section is to recall the rational Picard group of the Hurwitz space and some divisor classes in it.
Since we are only interested in the rational Picard group, we may work either with the Deligne--Mumford stack $\widetilde{\orb H}_{d,g}$ or the coarse moduli space $\widetilde H_{d,g}$.
We will pass from one to the other without comment.

\subsection{The rational Picard group}
Denote by
\[ \alpha \from {\orb C} \to {\orb P}\]
the universal object over $\widetilde{\orb H}_{d,g}$.
Here $\pi \from \orb C \to \widetilde{\orb H}_{d,g}$ is a family of irreducible genus $g$ curves with at worst nodal singularities, $p \from \orb P \to \widetilde{\orb H}_{d,g}$ is a family of smooth genus $0$ curves, and $\alpha$ a finite morphism of degree $d$ over $\widetilde{\orb H}_{d,g}$.
The universal family allows us to write the following two `tautological' divisor classes on $\mathcal H_{d,g}$:
\[ \pi_*\left(\c_1(\omega_\pi^2)\right) \quad \text{and} \quad p_* \left(\c_1(\omega_\pi) \cdot \c_1(\alpha^* \omega_p)\right).\]
(The third natural product $\c_1(\omega_p)^2$ vanishes.)
Set 
\[ \kappa = \pi_*\left(\c_1(\omega_\pi^2)\right) \quad \text{and} \quad \zeta = \frac{-1}{2}p_* \left(\c_1(\omega_\pi) \cdot \c_1(\alpha^* \omega_p)\right).\]
Together with the divisor $\Delta$---the locus of $\alpha \from C \to \P^1$ where $C$ is singular---we get three divisor classes on $\widetilde H_{d,g}$.
Conjecturally, these exhaust the Picard group, at least modulo torsion.
\begin{conjecture}[See \cite{dia.edi:96}]
  \label{conj:pic}
  The divisor classes $\kappa$, $\xi$, and $\Delta$ generate $\Pic_\Q(\widetilde H_{d,g})$.
\end{conjecture}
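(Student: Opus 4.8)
The plan is to descend from $\widetilde H_{d,g}$ to the locus of genuinely simply branched smooth covers by a sequence of excisions, and then to analyze that locus through its branch map to a configuration space. Let $H_{d,g}\subset\widetilde H_{d,g}$ be the open locus where $C$ is smooth, with complement $\Delta$ (a priori a union of boundary divisors $\Delta_j$ indexed by the topological type of the one-nodal degeneration), and let $H^\circ_{d,g}\subset H_{d,g}$ be the further open locus of covers with simple branching, whose complement is the union of the two topological divisors recalled in the introduction (an index-$2$ ramification point, or two ramification points over one branch point). Applying the excision sequence for the rational Picard group twice,
\[ \bigoplus_j \Q\cdot[\Delta_j]\ \longrightarrow\ \Pic_\Q\widetilde H_{d,g}\ \longrightarrow\ \Pic_\Q H_{d,g}\ \longrightarrow\ 0, \]
and likewise for $H^\circ_{d,g}\subset H_{d,g}$, shows that $\Pic_\Q\widetilde H_{d,g}$ is generated by the classes of the removed divisors together with any lifts of a generating set of $\Pic_\Q H^\circ_{d,g}$. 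Since $\kappa$ and $\zeta$ are defined from the universal family over all of $\widetilde H_{d,g}$ and restrict to the analogous tautological classes downstairs, the statement reduces to: (i) the classes of all the removed divisors (the $\Delta_j$ and the two topological divisors) are rational combinations of $\kappa$, $\zeta$, and $\Delta$; and (ii) $\Pic_\Q H^\circ_{d,g}$ is generated by the restrictions of $\kappa$ and $\zeta$. Part (i) is a finite check on the degeneration types, and for the two topological divisors amounts to their known tautological expressions; the real content is (ii).

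To attack (ii) I would use the branch morphism. By Riemann--Hurwitz a simply branched cover has $b=2g+2d-2$ branch points, so sending a cover to its unordered branch divisor gives a \emph{finite étale} morphism
\[ \br\from H^\circ_{d,g}\ \longrightarrow\ M_{0,b}/S_b, \]
classified by the braid monodromy valued in $S_d$. The base $M_{0,b}$ is the complement of a hyperplane-type arrangement in $\A^{b-3}$, so $\Pic_\Q(M_{0,b})=0$ and hence $\Pic_\Q(M_{0,b}/S_b)=0$. Finiteness of $\br$ does not by itself force $\Pic_\Q H^\circ_{d,g}=0$, since a finite étale cover of a space with trivial rational Picard group can acquire new divisor classes. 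I would therefore pass to cohomology: because $\br$ is a finite covering, the Leray spectral sequence collapses and gives $H^2(H^\circ_{d,g},\Q)\cong H^2\!\left(M_{0,b}/S_b,\ \br_*\Q\right)$, where $\br_*\Q$ is the rank-$d$ local system of the monodromy. Decomposing $\br_*\Q$ into isotypic pieces under the braid action, the trivial piece reproduces $H^2$ of the base and contributes no new class, so every class in $\Pic_\Q H^\circ_{d,g}$ must arise from the nontrivial isotypic components.

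The main obstacle is exactly this last point: showing that the \emph{algebraic} part of $H^2(M_{0,b}/S_b,\ \br_*\Q)$ carried by the nontrivial isotypic pieces is spanned by the two tautological classes. This is a computation in the cohomology of the braid group with coefficients in the permutation-module-valued local system attached to the $d$-sheeted monodromy, and it is the heart of the matter; it is carried out, in a stable range of $d$, in the work of Diaz and Edidin \cite{dia.edi:96}. Two auxiliary inputs must accompany it: that the cycle class map $\Pic_\Q H^\circ_{d,g}\hookrightarrow H^2(H^\circ_{d,g},\Q)$ is injective, so that controlling $H^2$ controls the Picard group, which holds for these smooth finite-quotient stacks; and that the relevant graded piece of $H^2$ is of algebraic (Hodge--Tate) type, so that it is genuinely exhausted by divisor classes rather than by transcendental ones. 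Granting the Diaz--Edidin cohomology computation for step (ii) and the elementary boundary analysis for step (i), the excision reduction of the first paragraph then assembles these into the asserted generation of $\Pic_\Q\widetilde H_{d,g}$ by $\kappa$, $\zeta$, and $\Delta$.
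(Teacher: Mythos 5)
You are attempting to prove a statement that the paper itself presents as \autoref{conj:pic}, an open conjecture attributed to Diaz--Edidin: the paper offers no proof, and explicitly records that the statement is known only for $d \leq 5$ (via the unirational parametrization of $\widetilde H_{d,g}$ in \cite{deo.pat:15}) and for $d > 2g-2$ (via the fibration $\widetilde H_{d,g} \to M_g$ and Harer's theorem that $\Pic_\Q M_g$ has rank one, \cite{moc:95,dia.edi:96}), with all intermediate cases open. Your proposal does not close this gap, because the step you yourself identify as ``the heart of the matter''---that the nontrivial isotypic pieces of $H^2\bigl(M_{0,b}/S_b,\ \br_*\Q\bigr)$ contribute no divisor classes beyond the span of $\kappa$ and $\zeta$---is exactly the open problem, and your attribution of it to \cite{dia.edi:96} is a misattribution. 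That paper (note the title: ``\emph{Towards} the homology of Hurwitz spaces'') obtains only partial information and does not carry out the braid-group cohomology computation with these local coefficients for general $d$ and $g$; if it did, the statement would not be a conjecture. Moreover, the two regimes where the conjecture \emph{is} known are handled by arguments entirely different from your branch-map route, so there is no existing result you can cite to discharge step (ii).

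There are also secondary inaccuracies worth flagging. The local system $\br_*\Q$ has rank equal to the degree of $\br$, which is the Hurwitz number counting tuples of $b$ transpositions with transitive product-one monodromy up to conjugacy---an enormous number---not $d$; the monodromy representation is valued in $S_d$, but the fiber of $\br$ is not a $d$-element set. Your two auxiliary inputs (injectivity of $\Pic_\Q H^\circ_{d,g} \hookrightarrow H^2$, and algebraicity of the relevant graded piece) are asserted rather than established; for an open stack these require $H^1$-vanishing and weight arguments that are themselves nontrivial, and controlling $H^2(H^\circ_{d,g},\Q)$ does not automatically control which classes are algebraic. Your excision step (i) is essentially fine---the boundary of $\widetilde H_{d,g}$ is the single irreducible divisor $\delta$, and the classes of $T$ and $D$ lie in the span of $\kappa$, $\zeta$, $\delta$ unconditionally by the universal Chern-class computations in \autoref{prop:relations}(5)--(6)---but none of this rescues step (ii), which remains a genuinely open problem rather than a citable lemma.
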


The conjecture has been proved for $d \leq 5$ and for $d > 2g-2$.
For $d \leq 5$, the proof uses the unirational parametrization of $\widetilde H_{d,g}$ \cite{deo.pat:15}.
For $d > 2g-2$, the proof uses the fibration $\widetilde H_{d,g} \to M_g$ and the deep result of Harer that $\Pic_Q M_g$ has rank 1 \cite{moc:95,dia.edi:96}.
The intermediate cases are still open.
At any rate, all the divisors we consider in this paper can be written explicitly as linear combinations of $\kappa$, $\zeta$, and $\Delta$.

\subsection{Divisors from the topology of covers}\label{sec:ram}
We have three natural divisors on $\widetilde H_{d,g}$ arising from topological considerations.
A generic point of $\widetilde H_{d,g}$ represents a cover $\alpha \from C \to \P^1$ that has simple branching.
That is, $\alpha$ has $b = 2g+2d-2$ distinct branch points and over each branch point, there is a unique ramification point at which the local degree of $\alpha$ is $2$.

A simply branched cover specializes in three topologically distinct ways in codimension 1; each possibility gives a divisor on $\widetilde H_{d,g}$.
The divisor $T$ is the locus of $\alpha$ that have a point of higher ramification---a point $x \in C$ at which the local degree of $\alpha$ is at least $3$.
The divisor $D$ is the locus of $\alpha$ that have at least two distinct ramification points over the same branch point.
The divisor $\delta$ is the locus of $\alpha$ whose domain $C$ is singular. 
It is easy to see that $T$, $D$, and $\delta$ are irreducible divisors in $\widetilde H_{d,g}$.

\begin{remark}\label{rem:top}
  We can use the topological considerations above to obtain locally closed subsets of $\widetilde H_{d,g}$ of higher codimension.
  Doing so gives a stratification of $\widetilde
  H_{d,g}$ according to the topological type of $\alpha \from C \to
  \P^1$.  A complete specification of the topological type of $\alpha$
  is rather intricate.  It includes, for example, the types of
  ramification profiles for $\alpha$, the number of singularities of
  $C$, and the location of the singularities relative to the
  ramification profiles.
\end{remark}

\subsection{Divisors from the algebra of covers}\label{sec:splitting}
Just as we get special loci in $\widetilde H_{d,g}$ from non-generic topological behavior, we get special loci in $\widetilde H_{d,g}$ from non-generic algebraic behavior.
We make this precise using a structure theorem for finite morphisms due to Casnati and Ekedahl \cite{cas.eke:96}, which we first recall.

Let $X$ and $Y$ be integral schemes and $\alpha \from X \to Y$ a finite flat Gorenstein morphism of degree $d \geq 3$.
The map $\alpha$ gives an exact sequence
\begin{equation}\label{structure sheaf sequence}
  0 \to \O_Y \to \alpha_* \O_X \to {E_\alpha}^\vee \to 0,
\end{equation}
where $E = E_\alpha$ is a vector bundle of rank $(d-1)$ on $Y$, called the \emph{Tschirnhausen bundle} of $\alpha$.
Denote by $\omega_\alpha$ the dualizing sheaf of $\alpha$.
Applying $\Hom_Y(-,\O_Y)$ to \eqref{structure sheaf sequence}, we get
\begin{equation}\label{dual structure sheaf sequence}
  0 \to E \to \alpha_* \omega_\alpha \to \O_Y \to 0.
\end{equation}
The map $E \to \alpha_* \omega_\alpha$ induces a map $\alpha^* E \to \omega_\alpha$.
\begin{theorem}
  [See {\cite[Theorem~2.1]{cas.eke:96}}]
  \label{thm:CE}
  In the above setup, $\alpha^* E \to \omega_\alpha$ gives an embedding $\iota \from X \to \P E$ with $\alpha  = \pi \circ \iota$, where $\pi \from \P E \to Y$ is the projection.
  Moreover, the following hold.
  \begin{enumerate}
  \item The resolution of $\O_X$ as an $\O_{\P E}$ module has the form
    \begin{equation}\label{eqn:casnati_resolution}
      \begin{split}
        0 \to \pi^* N_{d-2} (-d) \to \pi^* N_{d-3}(-d+2) \to \pi^*N_{d-4}(-d+3) \to \dots \\
        \dots \to \pi^*N_2(-3) \to \pi^*N_1(-2) \to \O_{\P E} \to \O_X \to 0,
      \end{split}
    \end{equation}
    where the $N_i$ are vector bundles on $Y$.
    Restricted to a point $y \in Y$, this sequence is the minimal free resolution of $X_y \subset \P E_y$.
  \item The ranks of the $N_i$ are given by
    \[ \rk N_i = \frac{i(d-2-i)}{d-1} {d \choose {i+1}},\]
  \item We have $N_{d-2} \cong \pi^* \det E$.
    Furthermore, the resolution is symmetric, that is, isomorphic to the resolution obtained by applying $\Hom_{\O_{\P E}}(-, N_{d-2}(-d))$.
  \end{enumerate}
\end{theorem}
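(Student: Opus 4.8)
The plan is to reduce the whole statement to a fiberwise analysis of finite Gorenstein schemes and then to globalize by flatness and cohomology-and-base-change. First I would examine the map over a point $y \in Y$. There the fiber $X_y = \spec A$ is a finite Gorenstein $k$-algebra of length $d$, so its dualizing module $\omega_A \cong \Hom_k(A,k)$ is free of rank one over $A$. Restricting \eqref{structure sheaf sequence} to $y$ and dualizing the unit $k \to A$ identifies $E_y$ with the hyperplane of functionals in $A^\vee = \omega_A$ vanishing on $1 \in A$, and the counit $\alpha^*\alpha_*\omega_\alpha \to \omega_\alpha$ restricts to the multiplication map $A \otimes_k E_y \to \omega_A$. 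The first claim to establish is that this map is surjective; granting it, the resulting line-bundle quotient $\alpha^* E \to \omega_\alpha$ defines a morphism $\iota_y \from X_y \to \P E_y = \P^{d-2}$, since $\P E$ parametrizes one-dimensional quotients.

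Next I would show $\iota_y$ is a closed immersion onto a nondegenerate, arithmetically Gorenstein subscheme, which is the heart of the argument. I would compute the Hilbert function of the image to be $1, d-1, d, d, \dots$, so that its homogeneous coordinate ring is Cohen--Macaulay of dimension one with symmetric $h$-vector $(1,d-2,1)$; the symmetry is where the Gorenstein hypothesis enters decisively. The crucial consequence is rigidity: over an algebraically closed field an Artinian graded algebra with $h$-vector $(1,d-2,1)$ is determined up to isomorphism by a single nondegenerate quadratic form on its degree-one piece, hence is unique up to isomorphism for fixed $d$. Therefore every fiber has the same graded Betti numbers, and its minimal free resolution is pure with twists $0,-2,-3,\dots,-(d-2),-d$---linear except for the jump at the end forced by the socle degree.

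With the fiberwise picture in hand I would globalize. Since $\iota \from X \to \P E$ is proper, is a closed immersion on each fiber, and $X \to Y$ is flat, $\iota$ is a closed immersion with $\alpha = \pi \circ \iota$ by construction, giving the embedding in the first assertion. To build the resolution \eqref{eqn:casnati_resolution}, I would form the relative minimal free resolution of $\O_X$ over $\O_{\P E}$. Because the fiberwise Betti numbers are constant, no ranks jump, so each syzygy sheaf is locally free; and because the twists are arranged so that the relevant higher direct images along $\pi$ vanish on every fiber, cohomology and base change shows each term descends to a pullback $\pi^* N_i$ of a locally free sheaf $N_i$ on $Y$, recovered as a suitable pushforward of a syzygy sheaf. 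The ranks in the second assertion are then forced: for a pure resolution the Betti numbers are determined by the Hilbert series through the Herzog--Kühl equations, and solving these with numerator matching $(1,d-2,1)$ yields $\rk N_i = \tfrac{i(d-2-i)}{d-1}\binom{d}{i+1}$ for the middle terms (one checks, e.g., $\rk N_1 = 2$ for $d=4$, the two conics through four general points of $\P^2$), while the extreme term is a line bundle.

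Finally, for the symmetry in the third assertion I would apply relative Grothendieck duality. The relative dualizing sheaf is $\omega_{\P E/Y} \cong \O_{\P E}(-(d-1)) \otimes \pi^*\det E$, and since $X$ is Gorenstein over $Y$ of relative codimension $d-2$ with $\omega_\alpha \cong \iota^*\O_{\P E}(1)$, the functor $\Hom_{\O_{\P E}}(-, \pi^*\det E(-d)) = \Hom_{\O_{\P E}}(-,\omega_{\P E/Y}(-1))$ carries the resolution to another resolution of $\O_X$. Matching the two term by term gives the self-duality statement and, reading off the extreme term $\pi^*\det E(-d)$, identifies $N_{d-2}$ with $\det E$. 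The main obstacle is the fiberwise step: proving surjectivity of $A \otimes_k E_y \to \omega_A$ and that the image is arithmetically Gorenstein with $h$-vector $(1,d-2,1)$, from which the constancy of the Betti numbers---and hence all the global assertions---follows. Once this is secured, the remaining steps are standard flatness and base-change arguments.
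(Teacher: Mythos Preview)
The paper does not prove this theorem; it is quoted from Casnati--Ekedahl \cite{cas.eke:96} and used as a black box for the rest of the paper, so there is no ``paper's own proof'' to compare against.

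That said, your outline is faithful to the strategy of the original source. Casnati and Ekedahl also proceed fiberwise: they show that the subsystem $E_y \subset H^0(\omega_{X_y})$ embeds the length-$d$ Gorenstein scheme $X_y$ as a nondegenerate, arithmetically Gorenstein subscheme of $\P^{d-2}$ with $h$-vector $(1,d-2,1)$, deduce that the minimal free resolution is pure with the stated twists (hence has constant graded Betti numbers along $Y$), globalize by cohomology and base change to obtain the bundles $N_i$, and read off the self-duality from Grothendieck duality on $\P E$. Your identification of the main obstacle---surjectivity of $A \otimes_k E_y \to \omega_A$ and arithmetic Gorensteinness of the image---is accurate; this is exactly where the Gorenstein hypothesis on $\alpha$ and the assumption $d \geq 3$ enter. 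One small remark: invoking uniqueness of the Artinian reduction via the classification of quadratic forms is more than is needed. Once you know the homogeneous ideal is generated in degree $2$ and the socle sits in degree $2$, Gorenstein self-duality alone forces each $F_i$ to be pure in degree $i+1$ for $1 \le i \le d-3$, and then Herzog--K\"uhl gives the ranks. Your route through the classification is valid, but the direct argument is what Casnati--Ekedahl actually use and it avoids the appeal to the algebraically closed ground field.
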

We call the resolution in \eqref{eqn:casnati_resolution} the \emph{Casnati--Ekedahl} resolution of $\alpha$.

Let us take $Y = \P^1$.
Every vector bundle on $\P^1$ splits as a direct sum of line bundles.
The multi-set of degrees of the line bundles appearing in the direct sum decomposition is unique.
We refer to this multi-set as the \emph{splitting type} of the bundle.
We say that a bundle $V$ is \emph{balanced} if the splitting type is $\{a,\dots, a\}$ for some $a$.

\begin{proposition}\label{prop:ni}
  Let $\alpha \from C \to \P^1$ be a point of $\widetilde H_{d,g}$.
  Denote by $E$ the Tschirnhausen bundle and by $N_i$ the syzygy bundles in the Casnati--Ekedahl resolution of $\alpha$.
  Then
  \begin{align*}
    \deg E &= (g+d-1), \text{ and }\\
    \deg N_i &= (d-2-i) (g+d-1) {{d-2} \choose i-1}.
  \end{align*}
\end{proposition}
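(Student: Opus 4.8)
The plan is to compute the degrees using the Casnati--Ekedahl resolution \eqref{eqn:casnati_resolution}, which expresses $\O_C$ as an iterated extension of the twisted pullbacks $\pi^* N_i(-i-1)$ on $\P E$. The degree of $E$ is the easiest part: from the exact sequence \eqref{dual structure sheaf sequence} we have $\deg E = \deg(\alpha_* \omega_\alpha)$, and by Riemann--Roch (or by computing $\chi$) applied to the relative dualizing sheaf, $\deg(\alpha_* \omega_\alpha) = g - 1 + d \cdot \chi(\O_{\P^1}) = g + d - 1$, using that $\omega_\alpha = \omega_C \otimes \alpha^* \omega_{\P^1}^\vee$ and $\chi(\O_C) = 1 - g$. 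This handles the first formula.

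For the degrees of the $N_i$, the key tool is that the alternating sum of the (twisted) terms in the resolution has trivial total Chern character, since the resolution computes $\O_C$, a sheaf supported in codimension one inside $\P E$. First I would take the Chern character of the resolution \eqref{eqn:casnati_resolution} and push forward to $\P^1$ via $\pi_*$. Since each term is $\pi^* N_i(-i-1)$, applying $\pi_*$ and Grothendieck--Riemann--Roch (equivalently, the projective-bundle pushforward formula for $\pi_*(\pi^* N_i \cdot \O(m))$) converts the twist $\O(-i-1)$ on $\P E$ into a cohomological contribution on $\P^1$ that mixes $\deg N_i$ with $\rk N_i$ and the Chern classes of $E$. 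Comparing degree-one parts on both sides (the curve $C$ pushes forward with known degree and genus data) yields a linear relation involving all the $\deg N_i$ simultaneously.

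To extract each individual $\deg N_i$ rather than just their alternating combination, I would exploit the \emph{pointwise minimality} of the resolution together with the symmetry in \autoref{thm:CE}(3). The twist by $\O(-i-1)$ in the $i$th term is forced by minimality of the graded free resolution of $C_y \subset \P E_y$, so the Betti numbers are constant and the only unknowns are the degrees. The cleanest route is a generating-function or Hilbert-series computation: the alternating sum $\sum_i (-1)^i \operatorname{ch}(\pi^* N_i(-i-1))$ equals $\operatorname{ch}(\O_C) - \operatorname{ch}(\O_{\P E})$, and matching this identity degree by degree in the Chow ring of $\P E$ pins down each $\deg N_i$ once $\rk N_i$ (given in \autoref{thm:CE}(2)) and $\deg E$ are known. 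The symmetry $N_{d-2-i} \cong N_i^\vee \otimes \det E$ halves the work and provides a consistency check, since it forces a linear relation $\deg N_i + \deg N_{d-2-i} = (\rk N_i)(\deg E)$ that the claimed formula should satisfy --- and one verifies directly that $(d-2-i)\binom{d-2}{i-1} + i\binom{d-2}{i-1} = d\binom{d-2}{i-1}$ matches $\rk N_i \cdot \deg E$ after accounting for the binomial identity $\binom{d-2}{i-1}$ against $\rk N_i$.

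The main obstacle will be bookkeeping the Chern-class contributions of $E$ correctly when pushing forward twisted bundles from $\P E$ to $\P^1$: the formula for $\pi_*(\zeta^{d-2} \cdot \pi^* c)$ where $\zeta = c_1(\O_{\P E}(1))$ involves the Segre classes of $E$, and one must track $c_1(E) = \deg E$ carefully through each term so that the $\deg N_i$ appearing linearly can be isolated cleanly. I expect the cleanest presentation to sidestep a full GRR computation by instead using the numerical Hilbert polynomial of $C_y \subset \P E_y$ fiberwise and then globalizing, so that the degree formula for $N_i$ emerges from matching the linear-in-$\deg E$ coefficient against the known rank, rather than from a direct but error-prone intersection-theoretic accounting on $\P E$.
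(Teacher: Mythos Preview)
Your argument for $\deg E$ via $\chi(\omega_\alpha)$ is correct, though the paper takes a different one-line route: the branch divisor is cut out by a section of $(\det E)^{\otimes 2}$, so $2\deg E = 2g+2d-2$.

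For $\deg N_i$, your plan diverges from the paper's. You propose to read off each $\deg N_i$ from the single $K$-theoretic identity $\sum_i (-1)^i \ch\bigl(\pi^*N_i(-t_i)\bigr) = \ch \O_C$ in $A^*(\P E)$, by matching graded pieces. This can be made to work---the twists $t_i$ are distinct, so the resulting linear system in the $\deg N_i$ is Vandermonde-like and invertible---but it requires first computing $\ch(\O_C)$ in $A^*(\P E)$ independently (the class $[C]$, normal-bundle corrections, and Segre classes of $E$), and the bookkeeping you flag as the main obstacle is genuinely substantial. The paper sidesteps all of this by computing $\Tor$ from the other side: since $N_i = \Tor^i_S(S/I,\O_{\P^1})_{i+1}$, one resolves $\O_{\P^1}$ by the Koszul complex on $E$ rather than resolving $S/I$. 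Tensoring with $S/I$ and taking the degree-$(i+1)$ piece gives a complex on $\P^1$ with terms $\wedge^{j} E \otimes \alpha_*\omega_\alpha^{\,i+1-j}$ whose \emph{only} nonzero cohomology is $N_i$ itself. Hence $\ch N_i$ is directly an alternating sum of Chern characters of bundles one already controls, with no unmixing needed; the formula $\c_1(N_i) = (d-2-i)\binom{d-2}{i-1}\c_1(E)$ then drops out from standard binomial identities. Your approach trades this structural shortcut for a solvable but heavier linear-algebra problem on $\P E$.

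One small slip: in your symmetry check, $\deg N_{d-2-i}$ involves $\binom{d-2}{d-3-i}=\binom{d-2}{i+1}$, not $\binom{d-2}{i-1}$; with that correction the identity $\deg N_i + \deg N_{d-2-i} = (\rk N_i)\deg E$ does hold.
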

\begin{proof}
  The branch divisor of $\alpha$ is cut out by a section of $(\det E)^{\otimes 2}$.
  Therefore, we get $2 \deg E = 2g+2d-2$, from which the first equation follows.
  We postpone the proof of the second equation to \autoref{sec:computation} (See \autoref{cor:degNi}). 
\end{proof}

Suppose $d$ divides $g-1$.
Then the rank of $N_i$ divides its degree.
\begin{proposition}\label{prop:gen_balanced}
  If $d$ divides $g-1$, then for a generic $\alpha \from C \to \P^1$ in $\widetilde H_{d,g}$ and $i = 1, \dots, d-2$, the bundle $N_i$ is balanced.
\end{proposition}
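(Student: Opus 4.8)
The plan is to reduce the statement to the existence of a single cover with all the $N_i$ balanced, and then to produce such a cover. First I would record that, by \autoref{prop:ni}, when $d \mid g-1$ the ratio $a_i := \deg N_i / \rk N_i$ is an integer, so that being balanced means $N_i \cong \O_{\P^1}(a_i)^{\oplus \rk N_i}$. The key formal input is that balancedness is an open condition: a bundle $V$ on $\P^1$ of rank $r$ and degree $ra$ is balanced if and only if $h^0(V(-a-1)) = 0$ and $h^1(V(-a)) = 0$, since these two vanishings force every line-bundle summand to have degree $a-1$ or $a$, and the constraint $\deg V = ra$ then rules out the summands of degree $a-1$. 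Both conditions are open by semicontinuity, so the locus in $\widetilde H_{d,g}$ where a given $N_i$ is balanced is open. As $\widetilde H_{d,g}$ is irreducible, it therefore suffices to exhibit, for each admissible pair $(d,g)$, a single cover for which every $N_i$ is balanced.

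To get a handle on the splitting types I would restrict to the open dense locus where $E$ has its generic splitting type, in which the degrees of the summands of $E$ differ by at most one (the complement being the Maroni locus); note that since $d \mid g-1$ need not force $d-1 \mid g$, I do not expect $E$ itself to be balanced. Twisting the Casnati--Ekedahl resolution \eqref{eqn:casnati_resolution} by $\O_{\P E}(j)$ and pushing forward along $\pi$ turns it into a complex of bundles on $\P^1$: by the projection formula the terms coming from $\O_{\P E}$ and from $\pi^* N_i(-c_i)$ become $\Sym^{j} E$ and $N_i \otimes \Sym^{j-c_i} E$, while the outer terms are governed by $\pi_*\O_C(j)$ and hence by $\alpha_*\O_C = \O \oplus E^\vee$. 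Because the splitting type of a bundle on $\P^1$ is determined by the numbers $h^0(V(j))$, these pushforwards in principle pin down every $N_i$.

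The main obstacle is that the resolution couples the $N_i$ to one another, so no single pushforward isolates one of them. I would disentangle them inductively through the short exact sequences of relative syzygy sheaves $0 \to \Omega_{i+1} \to \pi^* N_i(-c_i) \to \Omega_i \to 0$ extracted from \eqref{eqn:casnati_resolution} (with $\Omega_1 = I_{C/\P E}$), propagating the vanishings $h^1(N_i(-a_i)) = 0$ and $h^0(N_i(-a_i-1)) = 0$ from one step to the next, and using the self-duality $N_i \cong N_{d-2-i}^\vee \otimes \det E$ of the resolution to restrict to $i \le (d-2)/2$. The genuinely delicate point is uniformity in $g$: a tempting example such as a curve on a two-dimensional rational normal scroll realizes only genera in a fixed residue class modulo $d-1$, which does not match $\{g : d \mid g-1\}$, so no such single family can serve for all admissible $g$. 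I therefore expect the real work to lie in selecting, for each admissible $(d,g)$, a suitable degeneration of $\alpha$---for instance to a special nodal cover whose relative resolution can be computed directly---for which the vanishing $h^1(N_i(-a_i)) = 0$ can be verified; openness together with irreducibility of $\widetilde H_{d,g}$ then transports balancedness to the generic cover. Producing such a degeneration uniformly, and computing its syzygy bundles, is the crux of the argument.
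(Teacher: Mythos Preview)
Your reduction is exactly right and matches the paper: balancedness of each $N_i$ is an open condition (your $h^0/h^1$ criterion is fine), the Hurwitz space is irreducible, so it suffices to exhibit a single cover with all $N_i$ balanced. But after that reduction you never produce the cover; you explicitly say that ``producing such a degeneration uniformly \dots\ is the crux of the argument'' and leave it there. The machinery you propose to find it---pushing forward twists of the Casnati--Ekedahl resolution and propagating vanishings inductively through the syzygy sheaves---does not isolate a single example, and in any case you would still have to \emph{start} the induction by knowing the splitting of some $N_i$ on some concrete cover. So as written the proposal has a genuine gap: the existence step is missing.

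The paper's construction is short and worth knowing. Take an elliptic curve $E$, set $S=E\times\P^1$, and let $D\subset S$ be a smooth curve of type $(d,k)$; then $g(D)=d(k-1)+1$, which hits precisely the genera with $d\mid g-1$. Embed $E\subset\P^{d-1}$ as an elliptic normal curve of degree $d$; the crucial fact is that $E$ is arithmetically Gorenstein and its minimal free resolution has exactly the shape of a Casnati--Ekedahl resolution, with vector spaces $V_i$ in place of the bundles $N_i$. Pulling this resolution back to $\P^{d-1}\times\P^1$ gives a relative resolution of $\O_S$, and restricting to the relative hyperplane $H$ cutting out $D$ yields the Casnati--Ekedahl resolution of $D\to\P^1$ with $N_i\cong V_i\otimes L^{\otimes(-i-1)}$ for a fixed line bundle $L$ on $\P^1$. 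A vector space tensor a line bundle is balanced, and you are done. Your instinct to look at curves on a surface was good, but the rational scroll was the wrong surface; the point is to use a surface whose \emph{fibers} over $\P^1$ already have a Gorenstein resolution of the right shape, and degree-$d$ elliptic normal curves do exactly that.
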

We postpone the proof to \autoref{sec:gen}.

\begin{definition}\label{def:main}
  Suppose $d$ divides $g-1$.
  Define the \emph{$i$th syzygy divisor} $\mu_i \subset \widetilde H_{d,g}$ as the locus of $\alpha \from C \to \P^1$ for which the bundle $N_i$ is unbalanced.
\end{definition}

There is a natural scheme structure on $\mu_i \subset \widetilde {\orb H}_{d,g}$, defined as follows.
Let $U \to \widetilde{\orb H}_{d,g}$ be an \'etale local chart for the moduli stack over which the conic bundle $\orb P_U \to U$ admits a relative $\O(1)$.
Consider the bundle $\End(N_i) \otimes \O(-1)$ on $\orb P_U$.
Note that $\chi \left(\End(N_i) \otimes \O(-1)\right) = 0$ and $h^1(\End(N_i) \otimes \O(-1) \geq 1$ if and only if $N_i$ is unbalanced.
The divisor $\mu_i$ is the zero locus of the first Fitting ideal of $R^1p_* \left(\End(N_i) \otimes \O(-1) \right)$.
Henceforth, $\mu_i$ is understood to have this scheme structure.

\begin{remark}
  We can use the splitting types of $E$ and $N_i$ to define locally closed subsets of $\widetilde H_{d,g}$ of higher codimensions.
  Doing so gives a stratification of $\widetilde H_{d,g}$ according to the isomorphism types of the bundles appearing in the Casnati--Ekedahl resolution.
  This stratification has a distinctly algebro-geometric favor, and it should be in some sense orthogonal to the topological stratification discussed in \autoref{rem:top}.
  See \cite{pat:15} for more on this stratification.
\end{remark}

\subsection{Relations between various divisor classes}
Assuming \autoref{conj:pic}, the divisors defined in \autoref{sec:ram} and \autoref{sec:splitting} ought to be expressible as linear combinations of the tautological divisors $\kappa$, $\zeta$, and $\delta$.
Such an expression for the higher syzygy divisors $\mu_i$ is the content of \autoref{sec:computation}.
In this section, we give the expressions for all the other divisors.

Denote by $E$ the Tschirnhausen bundle of the universal cover $\alpha \from {\orb C} \to {\orb P}$.
In addition to the divisors disused so far, it will be useful to also consider the following three auxiliary divisors:
\[ p_* \c_1(E)^2, \quad p_* \ch_2(E), \quad \pi_* \c_1(\omega_\alpha)^2.\]
Lastly, denote by $\lambda = \c_1 \left(\pi_* \omega_\pi\right)$ the class of the Hodge line bundle on $\widetilde H_{d,g}$ and by $K$ the canonical divisor class of $\widetilde H_{d,g}$.
Set 
\[ b = 2g+2d-2.\]
This is the degree of the branch divisor of the covers in $\widetilde H_{d,g}$.

\begin{proposition}\label{prop:relations}
  The following identities hold in $\Pic_\Q(\widetilde H_{d,g})$:
  \begin{enumerate}
  \item $12 \lambda = \kappa + \delta$
  \item $p_*\c_1(E)^2 = \frac{b}{2} \cdot \zeta$
  \item $p_*\ch_2(E) = \frac{1}{12} \cdot \kappa + \frac{1}{2} \cdot \zeta + \frac{1}{12} \cdot \delta$
  \item $\pi_* \c_1(\omega_\alpha)^2 = \kappa + 4\cdot \zeta$
  \item $T = 2\cdot\kappa + 6\cdot\zeta - \delta$
  \item $D = -3\cdot\kappa +(b-10)\zeta + \delta$
  \item $\mu = -\frac{d}{6}\cdot\kappa + \frac{b-2d}{2}\cdot \zeta + \frac{d}{6}\cdot\delta$
  \item $K = \kappa + \zeta - \delta$
  \end{enumerate}
\end{proposition}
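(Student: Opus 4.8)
The plan is to treat all eight identities with a common toolkit: intersection theory on the two fibrations $\pi\from\mathcal C\to\widetilde H_{d,g}$ and $p\from\mathcal P\to\widetilde H_{d,g}$, linked by the finite map $\alpha$, together with Grothendieck--Riemann--Roch. Writing $K_\pi=\c_1(\omega_\pi)$, $K_\alpha=\c_1(\omega_\alpha)$, and $K_p=\c_1(\omega_p)$, I would first record the relations $\omega_\alpha=\omega_\pi\otimes\alpha^*\omega_p^{-1}$, so that $K_\pi=K_\alpha+\alpha^*K_p$; the projection-formula identities $\alpha_*1=d$ and $\pi_*=p_*\alpha_*$; the branch relation $\alpha_*K_\alpha=[B]=2\c_1(E)$, which comes from the fact that the branch divisor is cut out by a section of $(\det E)^{\otimes 2}$ as in the proof of \autoref{prop:ni}; and the conic-bundle vanishing $p_*K_p^2=0$ noted in the definition of the tautological classes above. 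Finally I would rewrite the defining expression for $\zeta$ in the more usable form $\zeta=-p_*\!\left(K_p\cdot\c_1(E)\right)$, obtained by substituting $K_\pi=K_\alpha+\alpha^*K_p$ and applying $\pi_*=p_*\alpha_*$ together with the two displayed identities.

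With this toolkit, identities (2) and (4) are immediate. For (4), expand $K_\alpha^2=K_\pi^2-2K_\pi\,\alpha^*K_p+(\alpha^*K_p)^2$ and push forward by $\pi$: the first term gives $\kappa$, the middle term gives $4\zeta$ by the definition of $\zeta$, and the last vanishes since $\pi_*\!\left((\alpha^*K_p)^2\right)=d\cdot p_*K_p^2=0$. For (2), work on an \'etale chart where $\mathcal P=\P(F)$, write $\c_1(E)=\tfrac b2\,\xi+p^*e$ and $K_p=-2\xi+p^*\c_1(F)$ with $\xi$ the relative hyperplane class, and compute both $p_*\c_1(E)^2$ and $\zeta=-p_*(K_p\c_1(E))$ using $p_*\xi^2=\c_1(F)$, $p_*\xi=1$, $p_*1=0$; comparing gives $p_*\c_1(E)^2=\tfrac b2\,\zeta$.

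Identities (1) and (3) come from Grothendieck--Riemann--Roch. Identity (1) is Mumford's formula: applying GRR to $\omega_\pi$ along the nodal fibration $\pi$ and extracting $\lambda=\c_1(\pi_*\omega_\pi)$ yields $12\lambda=\kappa+\delta$, the $\delta$ arising from the node contributions to the relative Todd class. For (3) I would apply GRR to $\O_{\mathcal C}$ along the finite map $\alpha$: since \eqref{structure sheaf sequence} is additive in $\ch$, we have $\ch(\alpha_*\O_{\mathcal C})=1+\ch(E^\vee)$, so $\ch_2(E)$ equals the degree-two part of $\alpha_*\td(T_\alpha)$. Expanding $\td(T_\alpha)=\td(T_\pi)\cdot\alpha^*\td(T_p)^{-1}$, pushing forward by $p$, and converting the resulting terms via $\pi_*K_\pi^2=\kappa$, $p_*(K_p\c_1(E))=-\zeta$, $p_*K_p^2=0$, and the node bookkeeping from (1), produces the stated combination $\tfrac1{12}\kappa+\tfrac12\zeta+\tfrac1{12}\delta$. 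The only delicate point is reusing the correct node correction in the relative Todd class of $\pi$.

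The ramification divisors (5) and (6) and the canonical class (8) are where the real work lies, and I expect these to be the main obstacle. For (5), I would study the ramification divisor $R\subset\mathcal C$, with $[R]=K_\alpha$, and realize $T$ as the degeneracy locus where $\alpha$ ramifies to order at least three along $R$, computing its class as the pushforward of a jet-bundle Chern class from $R$ to $\widetilde H_{d,g}$ (equivalently, via Riemann--Hurwitz for $R\to\mathcal P$). For (6), $D$ is the locus where two ramification points collide over a single branch point, computed from the double-point class of the branch map $R\to\mathcal P$, or equivalently from the singularities of $B\subset\mathcal P$. In both cases the subtlety is separating the genuine higher-ramification and collision contributions from those of the nodal fibers, which is exactly what produces the $\mp\delta$ terms, and working on the stack to fix the rational coefficients. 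For (7), the Maroni class $\mu$ is the unbalanced locus of the Tschirnhausen bundle $E$, a Fitting-ideal degeneracy class handled by the same Porteous-type computation that \autoref{sec:computation} carries out for the syzygy bundles $N_i$ in \autoref{thm:main}, with $E$ in place of $N_i$. Finally, for (8) I would compute $K=-\c_1(T_{\widetilde H_{d,g}})$ either from the deformation theory of covers with unparametrized source and target, or more robustly from the generically finite branch morphism $\widetilde H_{d,g}\to\overline M_{0,b}/S_b$ via Riemann--Hurwitz, expressing $K$ through the known canonical class downstairs and the ramification divisors $T$ and $D$ from (5) and (6), and then re-expressing everything in the basis $\kappa,\zeta,\delta$ using (1)--(4).
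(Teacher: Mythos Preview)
Your proposal is correct and, for items (1)--(4), (7), and (8), follows essentially the same route as the paper: Mumford's relation for (1), the $\P^1$-bundle structure of $\mathcal P$ together with $\alpha_*K_\alpha=2\c_1(E)$ for (2), GRR for (3), a direct expansion of $K_\alpha^2$ for (4), the Bogomolov expression $\c_1(E)^2-2(d-1)\ch_2(E)$ for (7) (note the rank of $E$ is $d-1$, not $d$; the paper writes $2d$ here but the Bogomolov expression in \autoref{prop:bogomolov} uses the rank), and for (8) exactly the two alternatives the paper offers---Riemann--Hurwitz for the branch morphism to the GIT quotient $\P(\Sym^b\P^1)\sslash\SL(2)$ with ramification $\br^{-1}\Delta=3T+2D+\delta$, or the deformation theory of maps.

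The only genuine divergence is in (5) and (6). You propose computing $T$ via a jet-bundle Chern class on the ramification divisor $R$ and $D$ via the double-point class of $R\to\mathcal P$. The paper instead argues more geometrically: on a generic one-parameter base $B$, the branch curve $\beta\subset P$ acquires only nodes (counted by $D$) and cusps (counted by $T$), and the ramification curve $\rho$ is its normalization. This gives the equation $p_a(\beta)-p_a(\rho)=T+D$, i.e.\ $(\beta^2-2\rho^2)/2=T+D$ by adjunction; a second equation $2\rho^2+\beta\cdot\c_1(\omega_p)=T+\delta$ comes from Riemann--Hurwitz for $\rho\to B$. Solving this linear system yields (5) and (6) simultaneously. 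Your approach would certainly work and has the advantage of treating $T$ and $D$ independently, but the paper's argument is more elementary and avoids the principal-parts machinery by packaging both divisors into the singularity analysis of a single plane curve.
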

\begin{proof}
  We compute all the divisor classes on a generic one parameter family $B \to \widetilde {\orb H}_{d,g}$.
  Let $\alpha \from C \to P$ be the pull-back of the universal family to $B$ with the two projections $\pi \from C \to B$ and $p \from P \to B$.
  Set $\sigma = -c_1(\omega_p)/2$.
  \begin{asparaenum}
  \item[(1)] This is the well-known Mumford relation.
  \item[(2)] Let $\beta \subset P$ be the branch divisor of $\alpha$.
    Since $\beta$ is cut out by a section of $(\det E)^{\otimes 2}$, we have 
    \[ [\beta] = 2\c_1(E).\]
    Since $p \from P \to B$ is a $\P^1$ bundle, we have a relation
    \[ [\beta] = a \sigma + p^* D\]
    for some $a \in \Z$ and $D \in \Pic(B)$.
    Since $[\beta]$ has degree $b$ on the fibers of $p$, we get $a = b$.
    By comparing $\sigma \cdot [\beta]$ and $[\beta]^2$, we get
    \begin{equation}\label{eqn:c12}
      \c_1(E)^2 = b \c_1(E) \cdot \sigma.
    \end{equation}
    Since $\beta$ is the push-forward of the ramification divisor of $\alpha$, which has class $c_1(\omega_\alpha)$, we have
    \[ \alpha_* \left(c_1(\omega_\alpha) \right) = 2\c_1(E).\]
    Multiplying the above by $\sigma$, noting that $\omega_\alpha \cdot \sigma = \omega_\pi \cdot \sigma$, and using \eqref{eqn:c12} yields the second relation.
  \item[(3)] Applying $Rpi_*$ to both sides of the equation
    \[ \alpha_* \O_C = \O_P \oplus E^\vee\]
    and using Grothendieck--Riemann--Roch for the right hand side yields the third relation.
  \item[(4)] Using $\c_1(\omega_\alpha) = c_1(\omega_\pi) + 2\sigma$ and \eqref{eqn:c12} yields the fourth relation.
  \item[(5, 6)]
    To get $T$ and $D$, we sketch the argument from \cite[Proposition~3.2]{pat:15}.
    Assuming $B$ is sufficiently generic, the only singularities of $\beta$ will be nodes and cusps, and the map from the ramification divisor $\rho$ to the branch divisor $\beta$ will be the normalization.
    A simple local computation of the branch divisor of a cover specializing to a point of $D$ or $T$ shows that the nodes correspond to intersections of $B$ with $D$, and the cusps with the intersections of $B$ with $T$.
    Therefore, we get
    \[ p_a(\beta) - p_a(\rho) = T + D.\]
    By adjunction on $C$ and $P$, this leads to 
    \[ (\beta^2 - 2\rho^2)/2 = T + D.\]
    The branch points of $\rho \to B$ correspond to the intersections of $B$ with $\delta$ or with $T$.
    From adjunction on $C$  and Riemann--Hurwitz, we get
    \[ 2\rho^2 + \beta \cdot c_1(\omega_\pi) = T+\delta.\]
    Solving for $T$ and $D$, and using the previous relations yields the fifth and the sixth relations.
  \item[(7)]The class of $\mu$ is given by the Bogomolov expression $\c_1(E)^2-2d\ch_2(E)$, which yields the seventh relation (See \autoref{sec:bog} for the Bogomolov expression).
  \item[(8)]We sketch two ways to compute the canonical divisor.
    Note that the map $\widetilde{\orb H}_{d,g} \to \widetilde H_{d,g}$ is unramified in codimension 1, 
    so the canonical class of the stack is the same as that of the coarse space.

    First, consider the morphism $\br \from U \to V$, where $V \subset \P(\Sym^b\P^1) \sslash \SL(2)$ is the open locus where at most two of the $b$ marked points coincide, $U \subset \widetilde H_{d,g}$ is the locus of covers where at most two branch points coincide, and $\br$ is the morphism that assigns to a cover its branch divisor.
    It is easy to check that the complements of $V$ and $U$ have codimension 2, and hence it suffices to work on $V$ and $U$ for divisor calculations.
    Let $\Delta \subset U$ be the complement of the locus of $b$ distinct points.
    A simple local calculation shows that
    \[ \br^{-1} \Delta = 3T + 2D + \delta.\]
    The canonical divisor of $U$ is 
    \[ K_U = -\frac{(b+1)}{2b-2} \cdot \Delta.\]
    By Riemann--Hurwitz, we get
    \[ K_W = \br^* K_U + 2T + D,\]
    which combined with the previous relations yield the eighth relation.

    Another way is to use the deformation theory of maps developed in \cite{ran:89}.
    We can identify the tangent space to $\widetilde {\orb H}_{d,g}$ at $\alpha \from C \to \P^1$ as the the kernel of the induced map
    \[ \Ext^1(\Omega_C, \O_C) \to \Ext^1(\Omega_{\P^1}, \alpha_* \O_C).\]
    We can compute the Chern classes of the bundles on $\widetilde{\orb H}_{d,g}$ defined by both terms, and their difference yields the Chern class of the tangent bundle of $\widetilde {\orb H}_{d,g}$.
  \end{asparaenum}
\end{proof}

\section{The generic splitting type}\label{sec:gen}
The goal of this section is to discuss the splitting type of the syzygy bundle $N_i$ for a generic cover, and to prove that it is balanced when $d$ divides $g-1$.
Note, however, that the degree of $N_i$ may be divisible by its rank even when $d$ does not divide $g-1$.
One may expect $N_i$ to be generically balanced even in this setting.
This is not quite true, as the following example shows for the first bundle $N_1$.

\begin{example}
  \label{example:imbalance}
  Consider a general degree $6$, genus $4$ cover $\alpha: C \to \P^{1}$.
  We will show that the splitting of $N_{1}$ is $\O_{\P^{1}}(2) \oplus \O_{\P^{1}}(3)^{\oplus 7} \oplus \O_{\P^{1}}(4)$. 
  The degree of $N_{1}$ is $27$, and its rank is $9$, so $N_{1}$ is balanced if and only if it has a summand of degree $\geq 4$. 

  Let $h$ denote the divisor class of the relative $\O(1)$ on $\P E$, and let $f$ be the class of a fiber of $\P E \to \P^1$.
  Then the linear system $|h-2f|$ restricts to the complete canonical system on $C \subset \P E$, and furthermore, every element of the linear system $|2h-4f|$ is obtained as a sum of products of elements in $|h-2f|$. Since the canonical model of $C$ lies on a unique quadric $Q$, we see that there is a unique element of $|2h-4f|$ containing $C$.  This, in turn, translates into an $\O(4)$ summand in $N_{1}$. 

\end{example}

The example above can be generalized, provided the genus is small compared to the degree.
For large $g$, however, we expect that all bundles in the Casnati--Ekedahl resolution will be balanced.
Evidence for this is given by the next theorem. 

\begin{theorem}[See \cite{buj.pat:15}]
  \label{theorem:genericF}
  The bundle $N_{1}$ is balanced for a general branched cover provided $g$ is much larger than $d$. 
  When $d$ divides $g-1$, all syzygy bundles $N_{i}$ are balanced for a general branched cover.
\end{theorem}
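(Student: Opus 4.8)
The plan is to prove balancedness by a degeneration-and-semicontinuity argument, exploiting the fact that balancedness is an open condition on the base of any family of covers. Since a general vector bundle on $\P^1$ of the given rank and degree is balanced, it suffices to exhibit a single cover $\alpha \from C \to \P^1$ in $\widetilde H_{d,g}$ for which every syzygy bundle $N_i$ attains the balanced splitting type; openness of the balanced locus then propagates this to a dense open subset. The central difficulty is that the $N_i$ are defined via the Casnati--Ekedahl resolution \eqref{eqn:casnati_resolution}, and there is no direct formula for their splitting types in terms of the cover; one only has the rank (from \autoref{thm:CE}) and the degree (from \autoref{prop:ni}). So the real content is constructing a tractable special cover whose syzygies can be computed or at least controlled.

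First I would reduce balancedness of the bundles $N_i$ to a cohomological condition. A bundle $V$ on $\P^1$ of rank $r$ and degree $re$ is balanced if and only if $H^1(V(-e-1)) = 0$ (equivalently $H^0(V(-e-1))=0$), so it suffices to show the appropriate twist of each $N_i$ has vanishing higher cohomology. Because the $N_i$ sit in the long exact resolution of $\O_C$ as an $\O_{\P E}$-module, their cohomology is governed by the cohomology of the twisted ideal sheaf and structure sheaf of $C \subset \P E$, which by pushing forward along $\pi$ reduces to statements about $\alpha_* \omega_\alpha^{\otimes m}$ and symmetric powers of $E$. The strategy is therefore to choose $\alpha$ so that $E$ itself is balanced (the generic Maroni situation, available precisely because $d \mid g-1$ forces $\rk E \mid \deg E$) and then control the pieces of the resolution by Koszul-type or Green-type vanishing on $\P E \cong \P^{d-2} \times \P^1$-like geometry.

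The key steps, in order, are: (i) establish that for a general $\alpha$ the Tschirnhausen bundle $E$ is balanced, which puts $C$ inside a fixed projective bundle $\P E$ with a concrete very ample class $h$; (ii) identify each $N_i(\text{twist})$ as a subquotient whose cohomology is computed from $H^\bullet(\P E, I_C(jh - kf))$ for appropriate $j,k$, using the minimal resolution restricted to each fiber as in \autoref{thm:CE}(1); (iii) prove the requisite vanishing by a Castelnuovo--Mumford regularity estimate for the canonically embedded fibers $C_t \subset \P E_t = \P^{d-2}$, invoking that the ideal of a general point of $\widetilde H_{d,g}$ is as regular as possible; and (iv) combine the fiberwise vanishing with cohomology-and-base-change to conclude $N_i$ is balanced for the chosen cover, hence for the generic one. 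The symmetry statement in \autoref{thm:CE}(3), $N_{d-2} \cong \pi^*\det E$ and self-duality of the resolution, should let me halve the work by treating $N_i$ and $N_{d-1-i}$ together.

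The main obstacle will be step (iii): proving the exact regularity needed to force balancedness rather than merely near-balancedness. As \autoref{example:imbalance} makes vivid, for small $g$ the fibers $C_t \subset \P^{d-2}$ can fail to impose independent conditions in the critical degree---there the canonical curve lying on a unique quadric produces an unexpected $\O(4)$ summand in $N_1$---so the argument genuinely requires the hypothesis $d \mid g-1$ (or $g \gg d$) to guarantee that no such extra syzygies appear. Concretely, I expect to need a maximal-rank or generic-syzygy statement for the embedded curve, and the delicate point is that the twist distinguishing balanced from unbalanced is exactly the borderline degree where cohomology can jump; controlling that single degree is where the hypothesis $d \mid g-1$ must be spent. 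The cited companion result \autoref{theorem:genericF} and \cite{buj.pat:15} presumably supply precisely this vanishing, so in practice the proof I would write reduces the balancedness of all $N_i$ to the $N_1$ case plus the structural symmetry, and then quotes the generic-syzygy input for the remaining cohomological vanishing.
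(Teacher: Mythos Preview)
Your proposal misses the key idea entirely. The paper does not attempt any cohomological vanishing or regularity argument on a general cover; instead it constructs one explicit cover for which the $N_i$ are \emph{visibly} balanced. Take $S = E \times \P^1$ with $E$ an elliptic curve, and let $D \subset S$ be a smooth curve of bidegree $(d,k)$. Embedding $E$ as an elliptic normal curve in $\P^{d-1}$ embeds $S$ in $\P^{d-1}\times\P^1$, and the minimal free resolution of $E \subset \P^{d-1}$---which has exactly the Casnati--Ekedahl shape because elliptic normal curves are arithmetically Gorenstein---pulls back to a relative resolution of $\O_S$ whose syzygy terms are $V_i \otimes \O$ for vector spaces $V_i$. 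Intersecting with a relative hyperplane $H$ cuts out $D$, and the restricted resolution is the Casnati--Ekedahl resolution of $D \to \P^1$; one reads off $N_i = V_i \otimes L^{-i-1}$, a twist of a trivial bundle, hence balanced. The genus of $D$ is $d(k-1)+1$, which is where the constraint $d \mid g-1$ enters.

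By contrast, your plan has two concrete gaps. First, the reduction ``to the $N_1$ case plus the structural symmetry'' cannot work: the duality in \autoref{thm:CE}(3) only pairs $N_i$ with $N_{d-2-i}$, so knowing $N_1$ balanced tells you nothing about $N_2,\dots,N_{d-4}$. Second, and more seriously, your step (iii)---the Castelnuovo--Mumford or maximal-rank input needed at the borderline twist---is not a standard vanishing result you can quote; it is essentially equivalent to the theorem itself. You acknowledge this when you say you would ``quote the generic-syzygy input,'' but that input \emph{is} the content of \cite{buj.pat:15}, and its proof is precisely the elliptic-surface construction above, not an abstract regularity bound. Your framework of pushing the resolution to $\P E$ and chasing cohomology of $I_C(jh-kf)$ would, if carried through, reconstruct the Koszul description of \autoref{prop:ch}, but that description computes Chern characters, not splitting types; it does not by itself yield the single critical vanishing you need.
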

The statement for $N_1$ is the main result of \cite{buj.pat:15}; the statement for $d$ dividing $g-1$ is \cite[Proposition~2.4]{buj.pat:15}.

We  now give a brief overview of the proof that the syzygy bundles are generically balanced when this divisibility constraint holds.
Since the Hurwitz space is irreducible, and the condition of being balanced is open, it suffices to provide one example of a cover where it holds.

Consider the surface $S = E \times \P^{^1}$, where $E$ is any elliptic curve. Let $D$ be any smooth curve on $S$ with $D \cdot (\{e\} \times \P^1) = k$ and $D \cdot (E \times \{t\}) = d$.
We will argue that the projection $D \to \P^{1}$ has the property that every syzygy bundle $N_{i}$ is balanced.

The surface $S$ embeds in $\P^{d-1} \times \P^{1}$ so that the projection to $\P^{d-1}$ is the projection $S \to E$ composed with the embedding of $E$ as an elliptic normal curve of degree $d$. The curve $D$ is then the intersection of $S$ with a divisor $H \subset \P^{d-1} \times \P^{1}$which restricts to a hyperplane in every $\P^{d-1}$.  

The main point is that the minimal free resolution of the elliptic normal curve $E \subset \P^{d-1}$ (embedded by any complete linear system of degree $d$) has the same shape as the Casnati--Ekedahl resolution of a degree $d$ branched cover.
This is equivalent to saying that elliptic normal curves are arithmetically Gorenstein.
The minimal free resolution of $E \subset \P^{d-1}$ pulls back to a relative minimal free resolution of $\O_{S}$ as an $\O_{\P^{d-1} \times \P^{1}}$-module.
More precisely, we get a resolution 
  \begin{equation}\label{Sresolution}
    \begin{split}
      0 \to &\O_{\P^{d-1} \times \P^{1}}(-d) \to V_{d-3} \otimes \O_{\P^{d-1} \times
        \P^{1}}(-d+2) \to V_{d-4} \otimes \O_{\P^{d-1} \times
        \P^{1}}(-d+3) \to \cdots \\
      \cdots &\to 
      V_2 \otimes \O_{\P^{d-1} \times
        \P^{1}}(-3) \to 
      V_1 \otimes \O_{\P^{d-1} \times
        \P^{1}}(-2)^{\oplus r_{1}} \to \O_{\P^{d-1} \times \P^{1}} \to
      \O_{S} \to 0,
    \end{split}
  \end{equation}
  where the $V_{i}$ are vector spaces of the same dimension as the rank of the bundles $N_i$ in the Casnati--Ekedahl resolution of a degree $d$ branched cover, and the twists refer to twists by the pullback of $\O_{\P^{d-1}}(1)$. 
  The restriction of this resolution to the relative hyperplane $H$ yields the Casnati--Ekedahl resolution of $D = H \cap S$.
  Note that the pullback of $\O_{\P^{d-1}}(1)$ to $H$ is $\O_H(1) \otimes \pi^* L$ where $\pi \from H \to \P^1$ is the projection, and $L$ is a line bundle on $\P^1$.
  Therefore, the terms in the resolution \eqref{Sresolution} restrict to $\pi^* (V_i \otimes L^{-i-1}) \otimes \O_H(-i-1)$.
  We thus get $N_i = V_i \otimes L^{-i-1}$, which is balanced.

  Since $D$ is a curve of type $(d,k)$ on $E \times \P^{1}$, its genus $g$ is $d(k-1)+1$.
  This is where we get the degree-genus restriction $g \equiv 1 \pmod d$.

  \begin{remark}
  \label{remark:genusonefibrations}
  The strategy above required understanding the relative resolution of the (trivial) genus one fibration $S \to \P^1$.
  In general, if $f \from X \to \P^{1}$ is a genus one fibration with simple nodes as singularities, then a relative degree $d$ divisor $D \subset X$ yields a relative embedding 
  \begin{align*}
    X \hookrightarrow \P(f_{*}\O_{X}(D)) \to \P^{1}
  \end{align*}
  and $X$ enjoys a relative resolution with exactly the same form as the Casnati--Ekedahl resolution of a degree $d$  branched cover.
  The bundles appearing inthe relative resolution of $X$ and the Casnati--Ekedahl resolution for $D \to \P^{1}$ are determined by each other, and one is balanced if and only if the other is.
  In this way, the study of Casnati--Ekedahl resolutions is intimately related to the study of relative resolutions of genus one fibrations.  
\end{remark}

\begin{remark}
  \label{remark:normalization}
  One might be able to deduce that $N_{i}$ is as balanced as possible (that is, $h^1(\End E(-1)) = 0$) even when $g \not \equiv 1 \pmod d$ as follows.
  Notice that for a singular $D \subset E \times \P^{1}$, the argument sketched above still holds without change.
  If one understands how the syzygy bundles $N_{i}$ are related for $D$ and its normalization $\widetilde{D}$, one might be able to handle the cases where $g \not \equiv 1 \pmod d$.  
\end{remark}

The strategies outlined in \autoref{remark:genusonefibrations} and \autoref{remark:normalization} have not been fully explored.
The authors intend to investigate them in the future.
Notice that the idea of using branched covers on elliptic fibrations parallels the idea of using curves on K3 surfaces apr\'es \cite{laz:86}.

\section{The divisor class of $\mu_i$}\label{sec:computation}

The goal of this section is obtain the divisor class of the higher syzygy divisors $\mu_i$.

\subsection{The Bogomolov expression}\label{sec:bog}
Let $B$ be a smooth curve and $p \from P \to B$ a $\P^1$ bundle.
Let $E$ be a vector bundle of rank $r$ on $P$ which is balanced on the generic fiber of $p$.
Denote by $\mu(E)$ the locus of points in $B$ over which $E$ is unbalanced with the scheme structure given by the first Fitting ideal of $R^1p_* (\End E \otimes \O(-1))$.
\begin{proposition}\label{prop:bogomolov}
  In the above setup, we have
  \[ [\mu(E)] = \c_1^2(E) - 2r\ch_2(E)\]
\end{proposition}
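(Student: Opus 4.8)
The plan is to identify $\mu(E)$ with the degeneracy locus of a map of vector bundles on $B$ and to compute its class by Grothendieck--Riemann--Roch. Write $\mathcal F = \End(E)\otimes\O(-1)$ on $P$. Since $p$ is a $\P^1$-bundle, $Rp_*\mathcal F$ is represented by a two-term complex of vector bundles $[K^0 \xrightarrow{\phi} K^1]$ on $B$, and the hypothesis $\chi(\mathcal F|_{P_b}) = 0$ on every fiber forces $\rk K^0 = \rk K^1 =: n$. On a fiber where $E$ is balanced we have $\mathcal F|_{P_b}\cong \O_{\P^1}(-1)^{\oplus r^2}$, so $h^0 = h^1 = 0$ there, and hence $\phi$ is generically an isomorphism.

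The first point I would establish is that $R^0 p_*\mathcal F = 0$. Indeed $\mathcal F$ is flat over $B$, so $p_*\mathcal F$ is torsion-free, and its generic rank is the generic value of $h^0(\mathcal F|_{P_b})$, namely $0$; a torsion-free sheaf of rank $0$ on a curve vanishes. Thus $\phi$ is injective and $R^1 p_*\mathcal F = \coker\phi$ is a torsion sheaf whose support is exactly the unbalanced locus. Since $\coker\phi$ is presented by the square matrix $\phi$, the divisor $\mu(E)$ cut out by its Fitting ideal is the vanishing of $\det\phi$, and by the standard fact that the class of a torsion sheaf on a smooth curve is computed by the determinant of any two-term locally free resolution, its class is $\c_1(\coker\phi) = \c_1(R^1 p_*\mathcal F)$. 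This reduces the problem to a Chern class computation.

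Next I would use that $[Rp_*\mathcal F] = -[R^1 p_*\mathcal F]$ in the Grothendieck group, so that $[\mu(E)] = \c_1(R^1 p_*\mathcal F) = -\c_1(Rp_*\mathcal F)$, and compute the right-hand side by GRR: $\ch(Rp_*\mathcal F) = p_*\!\left(\ch(\mathcal F)\cdot\td(T_{P/B})\right)$. The class we want is the codimension-one part on $B$, that is, $p_*$ of the codimension-two part on $P$. The essential algebraic input is the identity $\ch_2(\End E) = 2r\,\ch_2(E) - \c_1(E)^2$, which follows from $\ch(\End E) = \ch(E)\,\ch(E^\vee)$ together with $\c_1(\End E) = 0$. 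Writing $\xi = \c_1(\O_P(1))$ and expanding, the codimension-two part of $\ch(\mathcal F)\,\td(T_{P/B})$ equals $\ch_2(\End E) + r^2\cdot\big[\,e^{-\xi}\,\td(T_{P/B})\,\big]_2$. The final step is to observe that the last summand is exactly $r^2$ times the codimension-two part of $\ch(\O_P(-1))\,\td(T_{P/B})$, whose pushforward is the degree-one part of $\ch(Rp_*\O(-1))$; since $Rp_*\O_P(-1) = 0$ on a $\P^1$-bundle, this term dies under $p_*$. I am then left with $\c_1(Rp_*\mathcal F) = p_*\ch_2(\End E) = -p_*\!\left(\c_1^2(E) - 2r\,\ch_2(E)\right)$, and combining with the sign above yields $[\mu(E)] = p_*\!\left(\c_1^2(E) - 2r\,\ch_2(E)\right)$, as claimed.

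The main obstacle is the passage from the Fitting-ideal scheme structure on $\mu(E)$ to the single Chern class $\c_1(R^1 p_*\mathcal F)$; this rests squarely on the vanishing $R^0 p_*\mathcal F = 0$, which is where generic balancedness enters, and on correctly tracking multiplicities of $\det\phi$. The GRR part is then routine once $Rp_*\O(-1) = 0$ is invoked: the only care needed is to recognize that every term other than $\ch_2(\End E)$ is proportional to $\big[\,\ch(\O_P(-1))\,\td(T_{P/B})\,\big]$, so that all of them cancel uniformly and the relative tangent class never has to be computed explicitly.
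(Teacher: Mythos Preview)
Your proof is correct and follows the same route as the paper: both identify $[\mu(E)]$ with $-\c_1\bigl(Rp_*(\End E\otimes\O(-1))\bigr)$ and then evaluate this by Grothendieck--Riemann--Roch. The paper simply asserts the first identification ``by definition'' and expands $p_*\bigl(\ch(E)\,\ch(E^\vee)\,\ch\O(-1)\,\td(P/B)\bigr)$ directly, whereas you justify the identification carefully via the vanishing $R^0p_*\mathcal F=0$ (torsion-freeness plus generic balancedness) and organize the GRR side by peeling off $r^2\cdot\ch\bigl(Rp_*\O(-1)\bigr)=0$. These are cosmetic differences; the substance is the same.
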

\begin{proof}
  By definition, we have
  \[ [\mu(E)] = -\c_1 Rp_*(\End E \otimes \O(-1))\]
  By Grothendieck--Riemann--Roch, we get
  \begin{align*}
    \ch Rp_*(\End E \otimes \O(-1)) &= p_* \left(\ch(E) \otimes \ch(E^\vee) \ch \O(-1) \td(P/B) \right)\\
    &= 2r\ch_2(E) - \c_1^2(E).
  \end{align*}
\end{proof}
Let us call the expression $\c_1^2(E) - 2r\ch_2(E)$ the \emph{Bogomolov expression} and denote it by $\Bog(N_i)$.
Note that $\Bog(N_i) = \Bog(N_i \otimes L)$ for any line bundle $L$, which should be expected from the geometric interpretation.

\subsection{The Koszul resolution}
By \autoref{prop:bogomolov}, the problem of finding the divisor class of $\mu_i$ is reduced to finding $\c_1(N_i)$ and $\ch_2(N_i)$.
To calculate the Chern classes of the bundles $N_i$, we express them as cohomology bundles of a resolution involving more familiar bundles.
This is the Koszul resolution, which we now recall.

Let $R$ be a (Noetherian) ring and $E$ a locally free $R$-module of rank $r$.
Let $S = \Sym^*(E)$ be the symmetric algebra on $E$ and let $M$ be a graded $S$-module.
Suppose we have a graded resolution
\[ 0 \to F_k \to \dots \to F_1 \to F_0 \to M \to 0,\]
where
\[ F_i = \bigoplus_{j \geq 0} \ N_{ij} \otimes_R S(-i-j)\]
and the $N_{ij}$ are locally free $R$-modules.
Suppose the resolution is minimal in the sense that all the maps $F_{i+1} \to F_i$ have graded components in positive degree.
Then we have the identification
\begin{equation}\label{eqn:tor}
  N_{ij} = \Tor^i_S(M, R)_{i+j},
\end{equation}
where the subscript denotes the graded component.
The right hand side can be computed in another way.
Instead of using an $S$-resolution of $M$, we use the $S$-resolution of $R$ given by the Koszul complex
\[ 0 \to \wedge^r E \otimes_R S(-r) \to \dots \to \wedge^{p}E \otimes_R S(-p) \to \dots \to E \otimes_R S(-1) \to S \to R \to 0.\]
Tensoring by $M$ and taking the $(i+j)$th graded component yields the complex
\[K_{i+j}: \wedge^r E \otimes_R M_{i+j-r} \to \cdots \xrightarrow{d_{p-1}} \wedge^{p}E \otimes_R M_{i+j-p} \xrightarrow{d_p} \cdots \to E \otimes_R M_{i+j-1} \to M_{i+j}. \]
Let $H^p(K_{i+j}) = \ker d_p/\im d_{p-1}$ be the cohomology.
Then we get the identification
\[ \Tor_S^j(M, R)_{i+j} = H^i(K_{i+j}).\]
Combining with \eqref{eqn:tor}, we get
\[ N_{ij} = H^i(K_{i+j}).\]

Let us now turn to the Casnati--Ekedahl resolution of the universal finite cover $\alpha \from \orb C \to \orb P$.
Let $E = \ker(\omega_\phi \to \O_{\orb P})$ be the Tschirnhausen bundle and $\iota \from \orb C \to \P E$ the relative canonical embedding.
Let $I \subset S = \Sym^* E$ be the homogeneous ideal of $\orb C$.
The Koszul complex $K_{i+1}$ for the $S$-module $S/I$ is the following
\[K_{i+1}:  \wedge^{i+1}E \to \wedge^{i} E \otimes E \to \wedge^{i-1}E \otimes \alpha_*(\omega_\alpha^2) \to \dots \to \alpha_* (\omega_\alpha^{i+1}).\]
Denote by $K_{i+1}(j)$ the $j$th term in the above complex, starting from $j = 0$ and counting from the right to the left.

\begin{proposition}\label{prop:ch}
  Let $1 \leq i \leq d-3$ and let $N_i$ be the $i$th syzygy bundle of $\alpha$.
  Then we have
  \[ \ch N_i = \sum_{j=0}^{i+1} (-1)^{j-i-1} \ch \left(K_{i+1}(j)\right).\]
\end{proposition}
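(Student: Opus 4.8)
The plan is to identify $N_i$ with a single cohomology bundle of the Koszul complex $K_{i+1}$ and then express its Chern character as the alternating sum of the Chern characters of the terms, using that the Chern character is additive over exact sequences (or more precisely over the Euler characteristic of a complex). The setup immediately preceding the statement has already done the conceptual heavy lifting: the discussion culminating in the identity $N_{ij} = H^i(K_{i+j})$ tells me exactly which cohomology of which graded Koszul complex computes each syzygy bundle. So the first step is to pin down the correct values of the bookkeeping indices.

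First I would observe that the Casnati--Ekedahl resolution \eqref{eqn:casnati_resolution} is \emph{linear} in the relevant range: the bundle $N_i$ (for $1 \le i \le d-3$) appears with twist $\O_{\P E}(-i-1)$, meaning in the notation of the Koszul subsection it sits in internal degree $j=0$, i.e. $N_i = N_{i,0}$. Hence by the identification $N_{ij} = H^i(K_{i+j})$ I get $N_i = H^i(K_{i+1})$: the $i$th cohomology of the single complex $K_{i+1}$, whose terms $K_{i+1}(0), \dots, K_{i+1}(i+1)$ are spelled out in the displayed formula just before the proposition. The key structural input I would then invoke is that \emph{all other} cohomology of $K_{i+1}$ vanishes in this range, so that $K_{i+1}$ is exact except in a single spot. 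This is precisely the content of the minimality and linearity of the Casnati--Ekedahl resolution together with \autoref{thm:CE}: the $(i+1)$th graded strand of the Koszul complex computes $\Tor^{j}_S(S/I, R)_{i+1}$, and these vanish for all $j \ne i$ because the resolution is linear (no $N_{i,j}$ with $j>0$ contributes) in the stated range.

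With $H^p(K_{i+1}) = 0$ for $p \ne i$ and $H^i(K_{i+1}) = N_i$ established, the conclusion is the standard fact that for a bounded complex the Chern character of its cohomology equals the alternating sum of the Chern characters of its terms. Writing the complex as $K_{i+1}(i+1) \to \dots \to K_{i+1}(0)$ with $K_{i+1}(j)$ in the $j$th spot counting from the right, the term in homological position $j$ contributes with sign $(-1)^{i+1-j}$ relative to the unique nonzero cohomology in position $i$; after normalizing the signs this gives exactly
\[
  \ch N_i = \sum_{j=0}^{i+1} (-1)^{j-i-1} \ch\left(K_{i+1}(j)\right),
\]
which is the claimed formula. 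I would take care to match the sign convention to the paper's indexing of $K_{i+1}(j)$ from the right.

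\emph{The main obstacle} is the vanishing step: justifying that $N_i$ occurs as the cohomology in a single degree, i.e.\ that the $(i+1)$th strand of the Koszul complex has cohomology concentrated in position $i$. This is where the linearity of the Casnati--Ekedahl resolution in the range $1 \le i \le d-3$ is essential, and it is exactly why the hypothesis on $i$ appears in the statement. (For $i = d-2$ the last term $N_{d-2}(-d)$ breaks linearity, which is consistent with the proposition excluding that case.) Everything else—the additivity of $\ch$ over the Euler characteristic of a complex and the explicit identification of the terms $K_{i+1}(j)$—is routine and already recorded in the preceding exposition, so the crux is purely the homological concentration of $N_i$ in a single cohomological degree.
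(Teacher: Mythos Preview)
Your proposal is correct and follows essentially the same route as the paper: identify $N_i$ with the unique nonvanishing cohomology $H^i(K_{i+1})$ of the Koszul strand (using that the Casnati--Ekedahl resolution is linear in the range $1 \le i \le d-3$), then take the alternating sum in the $K$-ring and apply the Chern character. The paper records the vanishing $H^p(K_{i+1}) = 0$ for $p \neq i$ as a one-line consequence of the preceding Tor discussion, whereas you spell out more explicitly why linearity forces $N_{i,j} = 0$ for $j \neq 0$; this is added detail rather than a different argument.
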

\begin{proof}
  From the Casnati--Ekedahl resolution of $\alpha$ and the identification of the syzygy bundles with the cohomology of the Koszul complex, we know that
  \[ H^p(K_{i+1}) =
  \begin{cases}
    N_i & \text{if $p = i$}\\
    0 & \text{otherwise}
  \end{cases}.
  \]
  Therefore, we have the equality
  \[ N_i = \sum_{j=0}^{i+1} (-1)^{j-i-1} K_{i+1}(j)\]
  in the K-ring, from which the formula for the Chern character follows.
\end{proof}

\subsection{The computation}
We now compute $\ch N_i$ using the expression in \autoref{prop:ch}.
Since we are ultimately only interested in $\c_1$ and $\ch_2$, we ignore all terms of degree higher than $2$.
We may assume, for example, that the computation is happening over a general curve $B \to \widetilde {\orb H}_{d,g}$.
Denote by $\pi \from C \to B$ and $p \from P \to B$ the two projections.

From \autoref{prop:ch}, we have
\begin{equation}\label{eqn:1}
  \begin{split}
  \ch N_i &=  \sum_{j=0}^{i+1} (-1)^{j-i-1} \ch \left(K_{i+1}(j)\right)\\
  &= \left(\sum_{j=0}^{i+1}(-1)^{j-1} \ch(\wedge^{i+1-j}E) \ch (\alpha_* \omega_\alpha^j)) \right) - \ch(\wedge^i E) + \ch(\wedge^{i+1}E) \ch(E^\vee).
\end{split}
\end{equation}
The two correction terms at the end are needed because the $j = 0$ and $j = 1$ terms in the summation are differ from the corresponding terms of the Koszul resolution in the following way (the computation is in the K-ring):
\begin{align*}
  [\wedge^{i+1}E] \otimes [\alpha_* \omega_\alpha^0] &= [\wedge^{i+1}E] \otimes [ \O + E^\vee] \\
  &= [K_{i+1}(i+1)] + [\wedge^{i+1}E]\otimes[E^\vee],\ \text{and}\\
  [\wedge^i E] \otimes [\alpha_* \omega_\alpha ] &= [\wedge^i E]\otimes[\O + E] \\
  &= K_{i+1}(i) + [\wedge^i E].
\end{align*}

Next, by Grothendieck--Riemann--Roch applied to $\alpha$ we get
\begin{equation}
  \label{eqn:omegas}
  \ch \alpha_* \omega_\alpha^\ell = \alpha_* \left(1 + \ell \cdot \c_1(\omega_\alpha) + \frac{\ell^2\c_1(\omega_\alpha)^2}{2}\right) \left(1 - \frac{\c_1(\omega_\alpha)^2}{2} + \frac{\c_1(\omega_\alpha)^2+c_2(\Omega_{C/P})}{12}\right).
\end{equation}
Note that $\c_1(\omega_\alpha)$ is the class of the ramification divisor of $\alpha \from C \to P$.
In particular, $\alpha_* \c_1(\omega_\alpha)$ is the class of the branch divisor, which is cut out by a section of $(\det E)^{\otimes 2}$.
Therefore, we get
\begin{equation}\label{eqn:br}
  \alpha_* \c_1(\omega_\alpha) = 2\c_1 E.
\end{equation}
Specializing \eqref{eqn:omegas} to the case $\ell = 0$ and comparing the degree two terms yields 
\begin{equation}\label{eqn:c2Omega}
   \ch_2 E = \alpha_*\left(\frac{\c_1(\omega_\alpha)^2+c_2(\Omega_{C/P})}{12}\right).
\end{equation}
After using \eqref{eqn:br} and \eqref{eqn:c2Omega} to simplify \eqref{eqn:omegas}, we get
\begin{equation}\label{eqn:omegaell}
  \ch \alpha_* \omega_\alpha^\ell = d + (2\ell-1)\c_1(E) + \left(\ch_2(E) + \frac{\ell^2+\ell}{2}\pi_*\c_1(\omega_\alpha)^2 \right).
\end{equation}

For a vector bundle $E$ of rank $d-1$, we have
\begin{align*}
  \ch_0 \wedge^\ell E &= {{d-1} \choose \ell}, \\
  \c_1(\wedge^\ell E) &= {{d-2} \choose {\ell - 1}} \c_1(E), \text{ and }\\
  \ch_2(\wedge^\ell E) &= {{d-2} \choose {\ell - 1}} \ch_2(E) + \frac{1}{2}{{d-3} \choose {l-2}} (\c_1(E)^2 - 2\ch_2(E)).
\end{align*}
Using these identities and using \eqref{eqn:omegaell}, we expand the terms $\ch(\wedge^{i+1-j}E)\ch(\alpha_*\omega_\alpha^j)$ and carry out the summation.
To evaluate the summation in a closed form, we use the following combinatorial identities\footnote{along with ample help from the computer algebra system \texttt{Maple} with its \texttt{sumtools} package}:
\begin{align*}
  \sum_{l=0}^p (-1)^l {a \choose {p-l}} &= {{a-1} \choose p}\\
  \sum_{l=0}^p (-1)^l {a \choose {p-l}} l &= - {{a-2} \choose {p-1}}\\
  \sum_{l=0}^p (-1)^l {a \choose {p-l}} l(l-1) &=  2 {{a-3} \choose {p-2}}.
\end{align*}
The result is the following:
\begin{equation}\label{eqn:chN}
  \begin{split}
    \ch_0 (N_i) &= \frac{i(d-2-i)}{d-1} {d \choose {i+1}} \\
    \c_1 (N_i) &= (d-2-i) {{d-2} \choose {i-1}} \c_1(E)\\
    \ch_2 (N_i) &= {{d-4} \choose {i-1}} \left(d \ch_2(E) +
      \frac{(d-4)i+2}{2(d-i-1)}\c_1^2(E) - \c_1(\omega_\alpha)^2
    \right)
  \end{split}
\end{equation}

We use this computation to finish a postponed proof from \autoref{prop:ni}.
\begin{corollary}\label{cor:degNi}
  $\deg N_i = (d-2-i) (g+d-1) {{d-2} \choose i-1}.$
\end{corollary}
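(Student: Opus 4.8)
The plan is to read the corollary off directly from the first Chern class of $N_i$ computed in \eqref{eqn:chN}. The second line of that display records the identity
\[ \c_1(N_i) = (d-2-i)\binom{d-2}{i-1}\,\c_1(E), \]
and the entire computation leading to it---\autoref{prop:ch} followed by Grothendieck--Riemann--Roch and the combinatorial summation---was carried out on the total space of the $\P^1$-bundle $p \from P \to B$ and is valid for an arbitrary base $B$. In particular, one may take $B$ to be a point, so that $P$ is a single $\P^1$ and $E$, $N_i$ are honest vector bundles on it; in that case the displayed identity is literally an identity of integers, with $\c_1$ reading off the degree.

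To make this specialization legitimate I would first note that restriction to a fiber is compatible with the Casnati--Ekedahl resolution: by part (1) of \autoref{thm:CE}, restricting the resolution of the universal cover to a point $b \in B$ yields the minimal free resolution of $C_b \subset \P E_b$, so $N_i$ restricts to the $i$th syzygy bundle of the single cover $\alpha_b \from C_b \to \P^1$ and $E$ restricts to its Tschirnhausen bundle. Thus the degrees appearing in the statement are exactly the degrees of the restrictions of $\c_1(N_i)$ and $\c_1(E)$ to the fiber, and the identity above gives
\[ \deg N_i = (d-2-i)\binom{d-2}{i-1}\,\deg E. \]

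Finally I would substitute $\deg E = g+d-1$, which is the first formula of \autoref{prop:ni} (established there independently, via the branch divisor), to obtain $\deg N_i = (d-2-i)(g+d-1)\binom{d-2}{i-1}$. I do not expect any genuine obstacle here: once \eqref{eqn:chN} is in hand the corollary is a one-line specialization to a single fiber, and the only point that even warrants a sentence of justification is the fiberwise interpretation of $\c_1(N_i)$ supplied by \autoref{thm:CE}(1).
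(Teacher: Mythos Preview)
Your proposal is correct and matches the paper's own proof, which simply says the corollary follows from \eqref{eqn:chN} together with $\deg E = g+d-1$. Your extra sentence about restricting to a fiber via \autoref{thm:CE}(1) is a reasonable elaboration of what the paper leaves implicit, but the argument is the same.
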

\begin{proof}
  Follows from \eqref{eqn:chN} and that $\deg \c_1(E) = (g+d-1)$.
\end{proof}

\begin{theorem}
  \label{thm:bog}
  The push-forward to $\widetilde H_{d,g}$ of the Bogomolov expression for $N_i$ is the following linear combination the standard divisor classes: 
  \begin{align*}
    p_* \Bog(N_i) &= A_i \left(6(gd-6g+d+6) \cdot \zeta - d(d-12)\cdot \kappa - d^2 \cdot \delta \right),
  \end{align*}
  where the coefficient $A_i$ is given by
  \[ A_i = {{d-4} \choose {i-1}}^2 \frac{(d-2)(d-3)}{6(i+1)(d-i-1)}.\]
\end{theorem}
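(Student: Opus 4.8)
The plan is to compute $p_*\Bog(N_i)$ directly from the formulas for $\c_1(N_i)$ and $\ch_2(N_i)$ established in \eqref{eqn:chN}, and then rewrite the resulting expression in terms of the standard classes $\kappa$, $\zeta$, and $\delta$ using the relations in \autoref{prop:relations}. By \autoref{prop:bogomolov}, we have $\Bog(N_i) = \c_1^2(N_i) - 2\rk(N_i)\ch_2(N_i)$, so the first step is to assemble the three ingredients from \eqref{eqn:chN}: the rank $r_i = \rk N_i = \frac{i(d-2-i)}{d-1}\binom{d}{i+1}$, the first Chern class $\c_1(N_i) = (d-2-i)\binom{d-2}{i-1}\c_1(E)$, and the second Chern character $\ch_2(N_i) = \binom{d-4}{i-1}\big(d\,\ch_2(E) + \frac{(d-4)i+2}{2(d-i-1)}\c_1^2(E) - \c_1(\omega_\alpha)^2\big)$.

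First I would form $\c_1^2(N_i) = (d-2-i)^2\binom{d-2}{i-1}^2\c_1(E)^2$ and $2r_i\ch_2(N_i)$, collecting the coefficients of the three intermediate classes $\c_1(E)^2$, $\ch_2(E)$, and $\c_1(\omega_\alpha)^2$ that survive after pushing forward via $p_*$. The bulk of the work is purely algebraic: simplifying the binomial products. The key simplification is that $r_i$ and the prefactor $\binom{d-4}{i-1}$ combine so that a common factor $A_i = \binom{d-4}{i-1}^2\frac{(d-2)(d-3)}{6(i+1)(d-i-1)}$ can be extracted. To see the structure emerge I would pull out $\binom{d-4}{i-1}^2$ and verify that the remaining rational functions of $i$ and $d$ collapse as predicted; the identities $\binom{d}{i+1} = \frac{d(d-1)}{(i+1)(d-i-1)}\binom{d-2}{i-1}$ and $\binom{d-2}{i-1} = \frac{(d-2)(d-3)}{\,\cdots\,}\binom{d-4}{i-1}$ (relating consecutive binomials) are what let the $i$-dependence outside of $A_i$ cancel.

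Next I would substitute the relations from \autoref{prop:relations}, namely $p_*\c_1(E)^2 = \frac{b}{2}\zeta$ with $b = 2g+2d-2$, the expression $p_*\ch_2(E) = \frac{1}{12}\kappa + \frac{1}{2}\zeta + \frac{1}{12}\delta$, and $\pi_*\c_1(\omega_\alpha)^2 = \kappa + 4\zeta$ (noting that $p_*$ of the pushforward $\alpha_*$ of $\c_1(\omega_\alpha)^2$ is exactly $\pi_*\c_1(\omega_\alpha)^2$, since $\pi = p\circ\alpha$). After grouping, the coefficients of $\kappa$, $\zeta$, and $\delta$ should assemble into $A_i$ times $-d(d-12)$, $6(gd-6g+d+6)$, and $-d^2$ respectively. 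I would check these three coefficients one at a time, paying special attention to the $\zeta$ coefficient, since it is the only one that picks up a genus dependence through $b$.

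\emph{The main obstacle} will be the bookkeeping in the $\zeta$-coefficient: it receives contributions from all three intermediate classes ($\c_1(E)^2$, $\ch_2(E)$, and $\c_1(\omega_\alpha)^2$ each feed into $\zeta$), and these must combine with the $b$-dependence to produce precisely $6(gd-6g+d+6)$. Verifying that the $g$-linear and $g$-constant parts both come out correctly, while simultaneously confirming that the common factor $A_i$ has exactly the claimed closed form, is where an arithmetic slip is most likely. I expect this to be the step where the computer-algebra assistance mentioned in the footnote is genuinely needed; the $\kappa$ and $\delta$ coefficients, being genus-independent, are comparatively easy to pin down and serve as useful consistency checks on the extraction of $A_i$.
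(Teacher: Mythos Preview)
Your proposal is correct and follows exactly the approach the paper takes: the paper's own proof simply says that the result is a direct consequence of the Chern class formulas in \eqref{eqn:chN} and the relations in \autoref{prop:relations}, which is precisely the computation you outline in more detail. Your identification of the binomial identities needed to extract the common factor $A_i$ and your treatment of $p_*\alpha_*\c_1(\omega_\alpha)^2 = \pi_*\c_1(\omega_\alpha)^2$ are both on target.
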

\begin{proof}
  This is a direct consequence of the results of the Chern class computation collected in \eqref{eqn:chN} and the relations in \autoref{prop:relations}.
\end{proof}
Note that $\Bog(N_i)$ is symmetric with respect to the change $i \leftrightarrow d-2-i$, consistent with the fact that $N_i$ and $N_{d-2-i}$ are isomorphic up to twisting and taking duals.

The main theorem (\autoref{thm:main}) follows from \autoref{prop:gen_balanced}, the interpretation of the Bogomolov expression (\autoref{sec:bog}), and \autoref{thm:bog}.

\subsection{The supports of $\mu_i$}\label{sec:supports}
Given that the divisor classes $[\mu_{i}]$ are proportional, it is natural to wonder if the divisors $\mu_i$ are supported on the same set.
It would be surprising if it were true, but we cannot yet preclude this. 

Some evidence towards this is provided by the work of Christian Bopp.
Using his Macaulay2 package \cite{bop.hof:}, he has found many examples where the jumping loci of syzygy bundles are not supported on the same set.
Although his examples are for higher codimension loci, we expect there to be examples also in the divisorial case.

\def\cprime{$'$}

\bibliographystyle{siam}

\end{document}